\documentclass[10pt,reqno]{amsart}

\usepackage{amsmath}
\usepackage{amsfonts}
\usepackage{amssymb}
\usepackage{amsthm}
\usepackage{mathtools}

\usepackage{stmaryrd}

\usepackage{mathrsfs}
\usepackage{latexsym}
\usepackage{verbatim} 
\numberwithin{equation}{section}
\usepackage{enumitem}

\usepackage{graphicx}
\usepackage{subfigure}
\usepackage{epstopdf}

\usepackage{empheq}

\usepackage{ifthen} 

\provideboolean{shownotes} 
\setboolean{shownotes}{true} 
\newcommand{\margnote}[1]{
\ifthenelse{\boolean{shownotes}}%
{\marginpar{\raggedright\tiny\texttt{#1}}}%
{}%
}
\newcommand{\hole}[1]{
\ifthenelse{\boolean{shownotes}}%
{\begin{center} \fbox{ \rule {.25cm}{0cm}
\rule[-.1cm]{0cm}{.4cm} \parbox{.85\textwidth}{\begin{center}
\texttt{#1}\end{center}} \rule {.25cm}{0cm}}\end{center}}
{}
}

\theoremstyle{plain}

\newtheorem{lemma}{Lemma}[section]
\newtheorem{theorem}[lemma]{Theorem}
\newtheorem{proposition}[lemma]{Proposition}
\newtheorem{corollary}[lemma]{Corollary}

\theoremstyle{definition}

\newtheorem{remark}[lemma]{Remark}
\newtheorem{definition}[lemma]{Definition}

\theoremstyle{remark}

\usepackage{cite} 

\usepackage[colorlinks=true,urlcolor=blue,
citecolor=red,linkcolor=blue,linktocpage,pdfpagelabels,
bookmarksnumbered,bookmarksopen]{hyperref}
\usepackage{cleveref}
\usepackage{orcidlink}

\usepackage[pagewise,mathlines]{lineno}

\newcommand{\R}{\mathbb{R}}
\newcommand{\C}{\mathbb{C}}

\newcommand{\N}{\mathbb{N}}

\newcommand{\cL}{{\mathcal{L}}}
\newcommand{\cA}{{\mathcal{A}}}
\newcommand{\caB}{{\mathcal{B}}}
\newcommand{\cM}{{\mathcal{M}}}

\newcommand{\sgn}{\mathrm{sgn}\,}

\usepackage{scalerel}

\newcommand{\ep}{\epsilon}

\newcommand{\ShowColoredChanges}{true} 
\ifthenelse{\isundefined{\ShowColoredChanges}}
  {

  }
  {

  }

\begin{document}

\title[Analysis of the sine-Gordon equation with a $\delta$-potential]{Analysis of the sine-Gordon equation with a nonlinear $\delta$-potential}

\author[S. Moroni]{Sergio Moroni \orcidlink{0009-0005-1586-632X}}

\address{{\rm (S. Moroni)} Basque Center for Applied Mathematics\\Alameda de Mazarredo 14, 48009\\ Bilbao, Bizkaia (Spain)}

\email{smoroni@bcamath.org}

\author[R. G. Plaza]{Ram\'on G. Plaza \orcidlink{0000-0001-8293-0006}}

\address{{\rm (R. G. Plaza)} Departamento de Matem\'aticas y Mec\'anica\\Instituto de Investigaciones en Matem\'aticas Aplicadas y en Sistemas\\Universidad Nacional Aut\'onoma de M\'exico\\Circuito Escolar s/n, Ciudad Universitaria\\C.P. 04510 Cd. de M\'{e}xico (Mexico)}

\email{plaza@aries.iimas.unam.mx}

\begin{abstract}
This paper is devoted to the analysis of the following nonlinear wave equation
\[
u_{tt} - u_{xx} + (1 + q\delta_0(x)) \sin u = 0,
\]
where $\delta_0 = \delta_0(x)$ is the Dirac delta function centered at the origin and $q \in \R$ is a constant. Equations of this form arise in the study of propagating solitons in the presence of a localized inhomogeneity. It is proved that the Cauchy problem for this equation is globally well-posed in the energy space $H^1_{\sin} \times L^2$. A complete characterization of stationary waves in the energy space, based on the parameter $q$, is also provided. Finally, a criterion to determine the stability or instability of the stationary waves, which depends upon the sign of the parameter $q$, is established.
\end{abstract}

\keywords{sine-Gordon equation; delta nonlinearity; well-posedness; stationary waves; stability}

\subjclass[2020]{35Q51; 35J61; 47E05}

\maketitle
\setcounter{tocdepth}{1}



\section{Introduction}
\label{secintro}

In this paper we study the evolution nonlinear equation
\begin{equation}
\label{sGdelta}
u_{tt} - u_{xx} + (1 + q\delta_0(x)) \sin u = 0, \qquad x \in \R, \quad t > 0,
\end{equation}
for an unknown $u = u(x,t)$, where $q \in \R$ is a constant, $q \neq 0$, and $\delta_0 = \delta_0(x)$ stands for the Dirac delta function centered at $x = 0$. In the case when $q=0$, equation \eqref{sGdelta} reduces to the classical scalar field equation,
\begin{equation}
\label{sG}
u_{tt} - u_{xx} + \sin u  = 0, 
\end{equation}
known as the \emph{sine-Gordon model} in laboratory coordinates. 

The sine-Gordon equation \eqref{sG} is one of the fundamental models in mathematical physics with a broad range of applications, varying from negative Gaussian curvature manifold theory \cite{Eis1909} to the dynamics of the magnetic flux in a Josephson line \cite{BEMS71,SCR76}, among others. It is a nonlinear wave equation underlying many important mathematical features, such as complete integrability \cite{AKNS73,AKNS74}, a Hamiltonian structure \cite{TaFa76} and the existence of localized solutions (solitons) \cite{SCM73,Sco03}. We refer the reader to the monographs by Lamb \cite{Lamb80} and by Dauxois and Peyrard \cite{DaPe10} for further readings on the topic.
%
%
%
%
%

Being equation \eqref{sG} completely integrable, it admits a Lax pair formulation (cf. \cite{AKNS73,AKNS74}).  Moreover, it supports a variegated family of explicit soliton solutions, such as the time-independent \emph{topological kink} \cite{SCM73,Sco03},
\begin{equation} 
K(x) = 4\arctan e^x,
\label{Kink}
\end{equation}
as well as more intrincated configurations such as breathers and wobbling kinks (see, e.g., \cite{Sco79,Enz85,CQS11} and the references therein). Invariance with respect to Lorentz transformations leads to the existence, by application of Lorentz boost to $K$, of a family of traveling waves or moving kinks,
\begin{equation}
\label{kinky}
K_v (x,t) = 4 \arctan \Big( \exp \Big( \frac{x-vt - x_0}{\sqrt{1-v^2}}\Big) \Big),
\end{equation}
traveling with any subluminal speed $v\in (-1,1)$. 

The long time dynamics of solutions to the sine-Gordon equation has stimulated an intense investigation in the last 50 years and it is not our intention to make an exhaustive literature review here. However, it is worth mentioning that the orbital stability of the kink, by now a classical statement, was originally proved by Henry \emph{et al.} \cite{HPW82}, and that a more recent and different proof was presented in \cite{KMMVdB21}. For other explicit configurations the argument is more delicate, and it relies on the integrability of \eqref{sG}: either through the infinitely many conserved quantities for breathers \cite{AMuP17a}, or through the use of B\"acklund transform for multikink \cite{MuPa19} and wobbling kinks \cite{AMuP23}. The question of asymptotic stability is significantly harder. Recently in \cite{AMuP23} the authors have proved local asymptotic stability when the initial datum lies in a parity manifold; full asymptotic stability was proved, upon application of different strategies, by \cite{LuSch23,ChLuh24,KoYu23,ChLiuLu22} for initial data in weighted Sobolev spaces.

\subsection{Disordered systems in heterogenous media}

The model under consideration in this paper (equation \eqref{sGdelta}) is motivated by the theory of nonlinear excitations in disordered systems such as soliton interactions in media with localized inhomogeneities (or impurities); cf. Kivshar \cite{Kiv92}. For instance, in order to describe the presence of a point-like inhomogeneity along a Josephson line, the following model equation has been considered (cf. Fei \emph{et al.} \cite{FKV92}),
\begin{equation}
\label{sgdeltaep}
u_{tt} - u_{xx} + \sin u = \varepsilon \delta_0(x) \sin u,
\end{equation}
where $u$ is the normalized magnetic flux and the $\delta$-forcing models the action of an attractive (for $\varepsilon > 0$) defect localized at the origin. In the physics literature (see, e.g., \cite{FKV92,FKV93,KSV93,ZKMV91}), it is often assumed that $\varepsilon$ is positive and sufficiently small in order to account for the attractive nature of a small material defect. Clearly, equation \eqref{sgdeltaep} is a particular case of \eqref{sGdelta} with $q = - \varepsilon$.

Up to our knowledge, the most general and realistic equation with nonlinear $\delta$-localized potentials that models local regions of high-speed Josephson junctions (such as ``microshorts'' or thin spots of the electron barrier) was first proposed by McLaughlin and Scott \cite{McLSc78a}, for which equation \eqref{sgdeltaep} represents the particular case of a single impurity. Equation \eqref{sgdeltaep} was subsequently studied by Fei \emph{et al.} \cite{FKV92,KFV91} (see also \cite{ZKMV91}) to account for the interaction of a propagating wave with a strongly localized singularity. In particular, the authors in \cite{FKV92} study a kink moving towards the localized attractive impurity.  The nonlinearity $\sin u$ assures that the impurity significantly interacts with the kink only when the central corp of the traveling wave is close to the singular point. The paper presents a numerical approximation of the evolution, which suggests that the moving kink can trespass the singularity, or be reflected or captured by it. The different possible outcomes depend on the initial velocity of the kink. Later, Goodman \emph{et al.} \cite{GHW02,GoHa04} studied the same problem under a dynamical systems approach, proving that an approximation of \eqref{sGdelta} predicts dynamics in good coherence with the numerical results. More recently, Gomide \emph{et al.} \cite{GoGS20} improved a formal argument about the integral approximation, giving major robustness to the dynamical result. The cited works start with an \emph{anstaz} that simplifies a solution of \eqref{sGdelta} as the sum, for all times, of a moving kink of the form \eqref{kinky} and an oscillating internal mode around the singularity, also known as the \emph{impurity mode} (cf. \cite{KFV91}) and which is essentially a breather-type approximated solution, localized in space and periodic in time. From the scalar field equation \eqref{sGdelta} they deduce an effective Lagrangian which neglects small order terms for $|\varepsilon| \ll 1$;  the dynamics is then simplified to a coupled system of ODEs for the evolution of the internal mode and kink position. Most of the literature is devoted to study of this kink/impurity interaction under different methodologies and perspectives, in view that inhomogeneities give rise to effective potentials that affect soliton dynamics. In spite of its importance, however, we have not found an analytical study of the basic properties of equations of the form \eqref{sGdelta}.

The huge interest in evolution equations with singular interactions has motivated a robust mathematical investigation on the topic. Within a wider framework, dispersive PDEs with $\delta$ singularities have been used to model several physical configurations, varying from localized impurities to short range interactions. The reader is referred to the now classical monograph by Alveberio \emph{et al.} \cite{AGHH2ed} for further information. It is to be observed that for a nonlinear $\delta$-interaction, application of self adjoint extensions is delicate, in view that the domain of the operator may depend on time.  The Schr\"odinger equation with a concentrated nonlinearity has been studied in dimensions one \cite{AdTe01}, three \cite{ADFT03,AdNOr13} and more recently in dimension two as well (cf. \cite{CCT19,ACCT21}). For a wider treatment of the argument we refer to the review paper \cite{Tenta23} and the references therein. Stability and global attractor properties of stationary waves for the Klein-Gordon equation were proved in \cite{KoKo07a,CoKo21} in dimension one, and in \cite{KopKom19} in dimension three.

\subsection{Main results}

The purpose of this work is to initiate a rigorous study for equations of the form \eqref{sGdelta}. Specifically we prove global well posedness of the Cauchy problem, we characterize stationary waves and prove their stability or instability depending on the sign of $q$. We consider these results as a classical first step in the investigation of more difficult evolution properties for equation \eqref{sGdelta}. We perform the analysis for all the parameter values $q\in \R$ and not only for attractive small impurities. In vectorial form the equation is written as a first order Cauchy problem for $u_1:=u, u_2 :=u_t$, namely,
\begin{equation}     
\label{CauchyDeltaV}
\begin{cases}
\partial_t u_1= u_2,
     \\\partial_t u_2= \partial_{xx} u_1 - (1+ q\delta_0) \sin u_1.
     \end{cases}
\end{equation}

Our first result pertains to the global well-posedeness for the Hamiltonian system \eqref{CauchyDeltaV}, with conserved energy
\begin{equation}
\label{EnerDef}
    E(u_1,u_2)(t):= \int_\R \tfrac{1}{2}|u_2|^2 + \tfrac{1}{2}|\partial_x u_1|^2  + (1- \cos u_1) \, dx + q \left ( 1-\cos u_1(0) \right).
\end{equation}
The expression for the energy suggests $H^1_{\sin} \times L^2$ as the natural energy space for the Cauchy problem, just like in the classical sine-Gordon case, where
 \[
 H^1_{\sin}(\R) := \left \{ u  \in \dot H^1\, : \, \sin\left(\tfrac{1}{2} u\right ) \in L^2(\R) \right \}.
 \]

The global existence result can be formulated as follows.
\begin{theorem}
\label{TeoExi}
For any initial datum $(u_1^0, u_2^0) \in H^1_{\sin} \times L^2$ there exists a unique global solution $(u_1,u_2) \in C([0,\infty); H^1_{\sin} \times L^2)$ to the Cauchy problem \eqref{CauchyDeltaV}, which verifies
\begin{equation}
    \label{Energyineq2}
   \sup_{t\in\R} \left (\|\sin u_1(t)\|_{L^2_x}+\|u_2(t)\|_{ L^2_x}+\|\partial_x u_1(t)\|_{L^2_x}\right )\le C \| (u_1^0, u_2^0) \|_{H^1_{\sin} \times L^2}.
\end{equation}
Moreover, this solution conserves the energy, 
\begin{equation}
\label{Ener} 
E(u_1(t), u_2(t)) = E\left ( u_1^0, u_2^0\right ), \qquad \text{for all } t.
\end{equation}
\end{theorem}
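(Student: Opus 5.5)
We will treat \eqref{CauchyDeltaV} as a semilinear perturbation of the linear Klein--Gordon flow and run a fixed-point argument in Duhamel form, followed by energy conservation. The plan has three parts: a local theory, energy conservation, and a global bound.

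\textbf{Reduction to a fixed point.} Since the data lie in $H^1_{\sin}$ rather than $H^1$, we first split $u_1 = \phi + w$. Here $\phi$ is a fixed smooth reference profile, for instance $\phi = \bar u_1^0 * \rho$ for a mollifier $\rho$, where $\bar u_1^0$ agrees with $u_1^0$ up to the appropriate multiples of $2\pi$ at $\pm\infty$. The choice is made so that $u_1^0-\phi\in H^1$ and $\phi_{xx}, \sin\phi\in L^2$. The unknown $w$ then satisfies
\[
w_{tt} - w_{xx} + w = w - \sin(\phi+w) + \phi_{xx} - q\,\delta_0 \sin(\phi(0)+w(0)).
\]
We write this as a fixed point problem in $X_T = C([0,T];H^1\times L^2)$, using the Klein--Gordon group $S(t)$ on $H^1\times L^2$. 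All nonlinear terms except the delta term are globally Lipschitz $H^1\to L^2$, since $\sin$ is $1$-Lipschitz.

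\textbf{Main obstacle: the delta term.} The delta term $\delta_0 \sin(u_1(t,0))$ lies only in $H^{-1}$, not in $L^2$. We plan to handle it through the explicit one-dimensional Duhamel formula. The wave propagator applied to $\delta_0 f(t)$ yields $\frac12\int_0^{t-|x|} f(s)\,ds$ (up to a smooth Klein--Gordon correction with the Bessel kernel $J_0$). Its $x$-derivative is $\mp\frac12 f(t-|x|)$, so that
\[
\|\cdot\|_{H^1} \lesssim (1+T)\,\|f\|_{L^\infty(0,T)},
\]
and the time derivative is controlled in $L^2$ in the same way. Since $f(s)=\sin(\phi(0)+w(s,0))$ is bounded by $1$ and Lipschitz in $w$ through the embedding $H^1\hookrightarrow L^\infty$, the map is a contraction on $X_T$ for $T$ small, with $T$ depending only on $|q|$. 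This gives a unique local solution with continuity in time, and uniqueness then holds in $C([0,T];H^1_{\sin}\times L^2)$.

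\textbf{Energy conservation and global existence.} We approximate the data by smooth data and approximate $\delta_0$ by mollified $\delta_\epsilon$. For the regularized problem the energy, with $q\int\delta_\epsilon(1-\cos u_1)$, is conserved by the standard multiplier $u_t$. We then pass to the limit using the continuous dependence obtained from the contraction estimates, which are uniform in $\epsilon$ because the $\delta_\epsilon$ Duhamel bound is also controlled by $\sup|f|$. This yields \eqref{Ener}.

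For the global bound, the pointwise term satisfies $|q(1-\cos u_1(0))|\le 2|q|$, which is bounded regardless of the sign of $q$. Hence
\[
\tfrac12\|u_2\|_2^2+\tfrac12\|\partial_x u_1\|_2^2+\int(1-\cos u_1) \le E(u^0)+2|q|.
\]
Using $1-\cos u = 2\sin^2(u/2)$ and $|\sin u|\le 2|\sin(u/2)|$, we control $\|\sin u_1\|_2$, and therefore the $H^1_{\sin}\times L^2$ quantity. Since the local time depends only on $|q|$ and this bound, we iterate to get global existence and \eqref{Energyineq2}. The constant in \eqref{Energyineq2} absorbs the additive $|q|$; this is done either with the inhomogeneous form or, for small data, by noting $1-\cos u_1(0)\lesssim \|u_1^0\|^2$ via the Sobolev embedding applied to $\sin(u_1/2)$.
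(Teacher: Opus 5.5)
Your argument is correct in substance and takes a genuinely different route from the paper.

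\textbf{How the paper proceeds.} The paper never runs an explicit contraction on the $\delta$-problem. It regularizes by the mean-field term $q\rho_\varepsilon(x)\sin(\langle u_1,\rho_\varepsilon\rangle)$. It gets local solutions whose lifespan depends on $\varepsilon$, through $\|\rho_\varepsilon\|_{L^2}\le\varepsilon^{-1/2}$, and globalizes each one with the conserved $E_\varepsilon$. It then extracts only a \emph{weak} limit as $\varepsilon\to0$ and identifies the limit $\alpha(s)$ of the $\delta$-term in Duhamel's formula. Uniqueness comes from a separate contraction whose details are deferred to Komech--Komech. Energy conservation is obtained only as $E(u(t))\le E(u^0)$ by lower semicontinuity, and upgraded to equality by time reversibility.

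\textbf{How you proceed, and what it buys.} You instead use the explicit one-dimensional kernel. The Duhamel term of $\delta_0 f(t)$ has $x$- and $t$-derivatives $\mp\frac12 f(t-|x|)$ on $\{|x|<t\}$, up to a bounded Bessel correction. Hence its $H^1\times L^2$ norm is at most $C\sqrt{T}(1+T)\sup_{(0,T)}|f|$. Keep the factor $\sqrt T$, which comes from the support $\{|x|<t\}$. Your stated bound $(1+T)\|f\|_{L^\infty}$ does not tend to zero as $T\to0$ and would not give a contraction.

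With this estimate every term is globally Lipschitz. The lifespan then depends only on $q$, so global existence follows even before the energy is used. The same estimate holds uniformly in $\epsilon$ for $q\delta_\epsilon(x)\sin u_1$, since $\int_0^t\delta_\epsilon(x\pm(t-s))\,ds\le 1$.

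Add one consistency step, which is what ``passing to the limit'' really needs. For fixed continuous $u$, the Duhamel term of $\delta_\epsilon\sin u-\delta_0\sin u(0,\cdot)$ tends to zero in $C([0,T];H^1\times L^2)$, by dominated convergence. With it you get strong convergence of the regularized solutions, and hence energy \emph{equality} directly.

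Compared with the paper, your route gives a self-contained uniqueness and continuous-dependence theory. It also avoids both the weak identification of $\alpha(s)$ and the lower-semicontinuity-plus-reversal device.

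\textbf{The bound \eqref{Energyineq2}.} What you prove, and what the paper proves via $E_\varepsilon(u^0)+|q|$, is only the inhomogeneous bound. Your small-data remark controls only the $q$-term in $E(u^0)$. When $q<0$, the term $q(1-\cos u_1(t,0))$ at later times is negative and must be absorbed.

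Use $2\sin^2(u(0)/2)\le\|\sin(u/2)\|_{L^2}\|\partial_x u\|_{L^2}$ together with $\frac12 a^2+2b^2\ge 2ab$, where $a=\|\partial_x u_1\|_{L^2}$ and $b=\|\sin(u_1/2)\|_{L^2}$. This yields the homogeneous bound with $C=C(q)$ for all data when $q>0$ and when $-2<q<0$.

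For $q<-2$ no uniform constant can exist. The datum $(ae^{-|x|},0)$ has energy $(1+\frac q2)a^2+O(a^4)<0$. Suppose the solution stayed of size $O(a)$. The virial identity for $I(t)=\frac12\|u_1(t)\|_{L^2}^2$ would then give $\ddot I\ge -2E-O(a^4)\gtrsim a^2$ with $\dot I(0)=0$, so $I$ would grow quadratically, a contradiction. So for $q<-2$ only the inhomogeneous form of \eqref{Energyineq2} is true, and you should state it that way.
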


%


The proof of Theorem \ref{TeoExi} relies on an auxiliary problem with a mean field interaction approximating the $\delta$-term.  Regularity of the new problem implies conservation of energy, and hence global existence. It is to be observed that convergence of mean field approximation to the $\delta$-evolution equation has been used in specific contexts to validate the robustness and physical meaning of the model (see, e.g., \cite{CFNT14,CFNT17,ACCT21}). In the present paper, the regular approximation plays only the role of a technical tool to prove Theorem \ref{TeoExi}, following Komech and Komech \cite{KoKo07b}.

In our second result, we present a complete characterization of stationary waves of \eqref{sGdelta} in the energy space. As the system is not invariant with respect to Lorentz transformations or to the action of the $U(1)$ group, this amounts to study time independent solutions to the elliptic equation,
\begin{equation}
    \label{staticDelta}
    - u_{xx} + (1 + q \delta_0(x) ) \sin u = 0.
\end{equation}

Notice that if $u\in H^1_{\sin}$ solves \eqref{staticDelta} then $(u,0)$ is a stationary solution of the Cauchy problem \eqref{CauchyDeltaV}. In the free case $q=0$, the unique solution up to symmetries with finite energy is the kink.  The solution has a topological degree, as it connects the constant solutions with finite energy, $u=0$ and $u=2\pi$. In the perturbed case, we prove that the only static solution with the same topological degree is the centered kink. Moreover, for $|q|$ large enough, the singular condition in the origin allows for the existence of a stationary wave in $H^1$, in contrast with the classical model.

We have, consequently, the following result.

\begin{proposition}
\label{CarSW}
   If $|q|\le 2$, there exist no stationary solutions to \eqref{staticDelta} in the space $H^1(\R)$. If $|q|>2$ there exists a unique positive stationary wave in $H^1(\R)$, solution of \eqref{staticDelta}, given by
    \begin{equation}
        \label{formulaGS}
        Q(x)=  4\arctan \left( \sqrt{\frac{q+2}{q-2}} e^{-|x|}\right ).
    \end{equation}
For any $q\neq 0$ the unique $u\in H_{\sin}^1(\R)$ solution of \eqref{staticDelta} with 
\[
\lim_{x\to - \infty} u(x)= 0, \qquad \lim_{x\to \infty} u(x) = 2\pi,
\]
is the static kink, $K(x) = 4\arctan e^x$. 
\end{proposition}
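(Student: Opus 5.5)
The plan is to reduce \eqref{staticDelta} to a classical ODE on each half-line, coupled by a transmission condition at the origin. If $u\in H^1_{\sin}$ solves \eqref{staticDelta} in the distributional sense, then on $(-\infty,0)$ and $(0,\infty)$ we have $u''=\sin u$. Since $\sin u\in L^\infty$ and $u$ is continuous, a bootstrap shows $u$ is smooth on each half-line, with one-sided limits $u'(0^\pm)$. Testing \eqref{staticDelta} against $\varphi\in C_c^\infty(\R)$ and integrating by parts on each side then gives the conditions
\begin{equation*}
u(0^-)=u(0^+)=:u_0,\qquad u'(0^+)-u'(0^-)=q\sin u_0 .
\end{equation*}

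Next I classify the finite energy solutions on each half-line. Multiplying $u''=\sin u$ by $u'$ shows that $\tfrac12(u')^2-(1-\cos u)$ is constant on each half-line. Finite energy, together with the limits at $\pm\infty$, forces this constant to be zero; the limits are $2\pi k$, since $u'\in L^2$ and $\sin(u/2)\in L^2$. Hence $u'=\pm 2\sin(u/2)$ on each half-line. Integrating this separable ODE, every such piece is either a constant $2\pi k$ or a translated kink/antikink branch. On $(0,\infty)$ with $u\to 2\pi k$ the pieces are $u=2\pi k+4\arctan(b e^{-x})$, $b\in\R$. On $(-\infty,0)$ with $u\to 2\pi m$ they are $u=2\pi m+4\arctan(a e^{x})$, $a\in\R$. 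A direct computation, using $\sin(u/2)=2\tan(u/4)/(1+\tan^2(u/4))$, gives $u'=-2\sin(u/2)$ for the right pieces and $u'=+2\sin(u/2)$ for the left pieces. Since $4\arctan$ takes values in $(-2\pi,2\pi)$, this also pins down the range of $u_0$ on each side.

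For the $H^1$ case both limits are $0$, so $u=4\arctan(ae^{x})$ for $x<0$ and $u=4\arctan(be^{-x})$ for $x>0$. Continuity gives $a=b=:c$, so $u=4\arctan(ce^{-|x|})$ and $u'(0^+)=-u'(0^-)=-2\sin(u_0/2)$. The jump condition becomes
\[
-4\sin(u_0/2)=2q\sin(u_0/2)\cos(u_0/2).
\]
For $c\neq 0$ we have $u_0\in(-2\pi,2\pi)\setminus\{0\}$, so $\sin(u_0/2)\neq0$ and therefore $\cos(u_0/2)=-2/q$. This is impossible for $|q|<2$. For $|q|=2$ it forces $u_0\in\{0,\pm2\pi\}$, which is excluded. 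So only the trivial solution survives when $|q|\le 2$. For $|q|>2$, writing $\cos(u_0/2)=(1-c^2)/(1+c^2)$ gives $c^2=(q+2)/(q-2)>0$. Positivity selects $c>0$, which yields exactly \eqref{formulaGS}.

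For the kink, the left piece is $4\arctan(ae^{x})$, so $u_0\in(-2\pi,2\pi)$. The right piece is $2\pi+4\arctan(be^{-x})$, so $u_0\in(0,4\pi)$. In both pieces $u'=2\sin(u/2)$, so $u'$ is continuous at $0$ and the jump condition reduces to $q\sin u_0=0$. Since $q\ne0$ and $u_0\in(0,2\pi)$, this gives $u_0=\pi$, which forces $a=1$ and $b=-1$. Both pieces then coincide with $K(x)=4\arctan e^x$.

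The main obstacle I expect is the rigorous justification of the first integral with zero constant. This requires showing that $u$ actually has limits $2\pi k$ at $\pm\infty$ and that $u'\to0$ there, using only $u'\in L^2$, $\sin(u/2)\in L^2$ and the ODE. My approach is to show that $u'$ is uniformly continuous, since $u''=\sin u$ is bounded; together with $u'\in L^2$ this gives $u'\to0$. I would then use that the quantity $\tfrac12(u')^2-(1-\cos u)$ is constant, and that $1-\cos u=2\sin^2(u/2)$ is integrable, to conclude that $1-\cos u\to0$ and the constant vanishes. Continuity of $u$ then forces convergence to a single multiple of $2\pi$.
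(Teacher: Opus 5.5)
Your proposal is correct and follows the same route as the paper. You reduce \eqref{staticDelta} to $u''=\sin u$ on each half-line with the jump condition $u'(0^+)-u'(0^-)=q\sin u(0)$ (the paper's Lemma~\ref{Lemma1}), classify finite-energy half-line solutions through the first integral (Lemma~\ref{Lemma2}), and solve the gluing condition, which gives $c^2=(q+2)/(q-2)$ in $H^1$ and $u(0)=\pi$ for the kink.

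Two refinements over the paper are worth keeping. Your argument that the first-integral constant vanishes ($u'\to 0$ from uniform continuity plus $u'\in L^2$, then $1-\cos u\in L^1$) is tighter than the paper's periodic-orbit sketch. Your parametrization $4\arctan(ce^{-|x|})$, $c\in\R$, also covers negative and sign-changing $H^1$ candidates, which the paper's proof sidesteps by assuming positivity.

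The only slip is your general claim that $u'=\mp 2\sin(u/2)$ on the right/left pieces: this holds only for even $k$. For $k=1$ the sign flips, exactly as you correctly use in the kink case.
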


The proof of Proposition \ref{CarSW} is constructive. The perturbation of the sine-Gordon equation is localized at the point $x=0$. Hence a stationary solution of \eqref{sGdelta} solves the classical sine-Gordon equation in the two semiaxes $\left \{ x>0 \right \}, \ \left \{ x<0 \right \}$, while the $\delta$ term imposes a nonlinear gluing condition in $x=0$ between the two branches (see Figure \ref{figQfunction} for a depiction of the stationary wave solution \eqref{formulaGS} for $q = -4$). 

\begin{figure}[t]
\begin{center}
\includegraphics[scale=0.6]{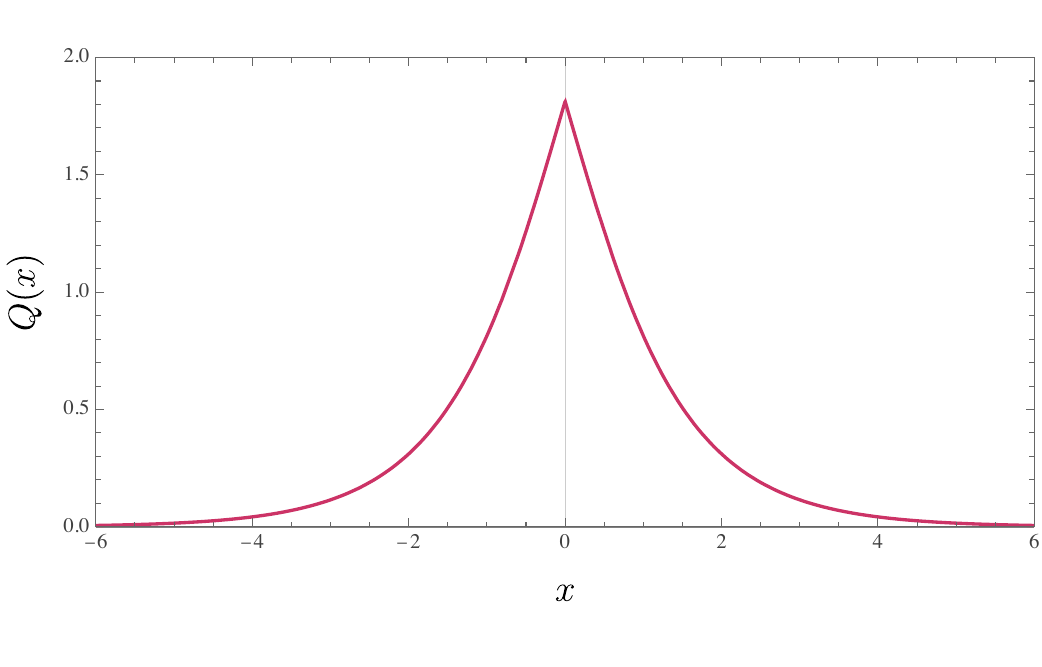}
\caption{\small{Plot of the stationary wave \eqref{formulaGS} (in red) for the parameter value $q = -4$ (color online). }
}
\label{figQfunction}
\end{center}
\end{figure}

Existence or nonexistence of a ground state for different values of $q$ correspond to the possibility, for a properly translated kink, to satisfy the gluing condition. For $q<0$, we also give a variational characterization of the stationary waves; this enforces the interest in the stationary waves for the evolution system, as they appear as minimal points of the energy. 

\begin{proposition}
Let $q<0$ and consider the minimization problems
\begin{align}
\label{MinGS}
c:&= \inf_{u\in H^1}  E(u,0), \\ 
d:&= \inf_{\substack{ u\in H_{\sin}^1  \\ u(-\infty) =0 \\ u(\infty) =2\pi}} E(u,0).
\label{MinKink}
\end{align}
Then for $-2\le q <0$ the infimum in problem \eqref{MinGS} is reached by $0$,  while for $q<-2$ it holds $0>c= E(Q,0)$. Similarly, for any $q<0$, the infimum problem is reached by the kink: $d= E(K,0)$.
\label{MinSW}
\end{proposition}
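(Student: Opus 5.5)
The plan is to bound $E(u,0)$ from below by a Bogomol'nyi-type estimate on each half-line. This reduces both minimization problems to minimizing an explicit function of the single value $a:=u(0)$, and we then check that $0$, $Q$ and $K$ attain these lower bounds.

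\textbf{Step 1: half-line estimate.} For $u\in H^1_{\sin}$, $u$ is continuous and $1-\cos u=2\sin^2(u/2)$. By $\tfrac12 A^2+\tfrac12 B^2\ge |AB|$,
\[
\int_0^\infty \tfrac12 u_x^2+2\sin^2(u/2)\,dx \;\ge\; \Big|\int_0^\infty 2u_x\sin(u/2)\,dx\Big| \;=\; 4\big|\cos(u(0)/2)-\cos(u(+\infty)/2)\big|,
\]
and analogously on $(-\infty,0)$. Equality holds if and only if $u_x=\pm 2\sin(u/2)$ on that half-line, that is, if $u$ is a piece of a (translated, possibly reflected) kink.

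The fundamental theorem of calculus here needs $u(\pm\infty)$ to exist. For the problem \eqref{MinKink} this is imposed. For $u\in H^1$ we have $u(\pm\infty)=0$. To be careful, we integrate on $(0,R)$, use absolute continuity of $u$, and let $R\to\infty$ along the limit.

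\textbf{Step 2: the problem \eqref{MinGS}.} Set $c=\cos(a/2)\in[-1,1]$. For $u\in H^1$ both half-lines give $4(1-c)$. Using $1-\cos a=2(1-c)(1+c)$, we get
\[
E(u,0)\ge f(c):=(1-c)\big(8+2q(1+c)\big).
\]
\begin{itemize}[leftmargin=*]
\item If $-2\le q<0$, then $8+2q(1+c)\ge 8+4q\ge 0$, so $E(u,0)\ge0=E(0,0)$ and $c=0$.
\item If $q<-2$, then $f'(c)=-8-4qc$ vanishes at $c_*=-2/q\in(0,1)$, and the minimum value is $f(c_*)=2(q+2)^2/q<0$. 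At the endpoints we have $f(1)=0$ and $f(-1)=16$, so $c_*$ is the global minimizer on $[-1,1]$.
\end{itemize}

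For the case $q<-2$ it remains to check that $Q$ realizes this bound.
\begin{itemize}[leftmargin=*]
\item Writing $s=\sqrt{(q+2)/(q-2)}$, we get $\cos(Q(0)/2)=\cos(2\arctan s)=(1-s^2)/(1+s^2)=-2/q=c_*$.
\item On each half-line $Q$ is a kink tail, so $Q_x=\mp2\sin(Q/2)$ and Step 1 holds with equality.
\end{itemize}
Hence $E(Q,0)=f(c_*)=c<0$.

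\textbf{Step 3: the problem \eqref{MinKink}.} Now $u(-\infty)=0$ and $u(\infty)=2\pi$. Step 1 gives $4(1-c)$ on the left and $4|{-1}-c|=4(1+c)$ on the right, hence
\[
E(u,0)\ge 8+q(1-\cos a)\ge 8+2q,
\]
where the last inequality uses $q<0$ and $1-\cos a\le 2$. For the kink $K$ we have $K(0)=\pi$, so $1-\cos K(0)=2$, and $K$ saturates Step 1 on both sides. Therefore $E(K,0)=8+2q=d$.

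\textbf{Main obstacle.} The only delicate point is the rigorous justification of Step 1 for general $u\in H^1_{\sin}$, namely the existence of the limits at infinity and the passage $R\to\infty$. This is handled by the standard fact that finite $\int u_x^2+\sin^2(u/2)$ forces $u(x)$ to converge to $2\pi\mathbb Z$, combined with the boundary conditions built into each problem. Everything else is the elementary one-variable optimization above.
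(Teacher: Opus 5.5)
Your proof is correct, and it takes a genuinely different route from the paper.

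\textbf{The paper's route.} The paper uses the direct method. Coercivity and weak lower semicontinuity of $E(\cdot,0)$ on $H^1$ produce a minimizer. For $-2<q<0$, negative energy is ruled out by contradiction, using a scaling $v\mapsto bv$ together with the sharp inequality \eqref{Gagl}. The case $q=-2$ is then obtained by a limit in $q$. For $q<-2$, a test function with negative energy shows the minimizer is nontrivial. That minimizer solves \eqref{staticDelta} and is identified with $Q$ through the uniqueness in Proposition~\ref{CarSW}. For $d$, the paper splits the infimum exactly as in your Step 3. It then gets the minimizer of the integral part as a sine-Gordon kink, via a translation normalization, weak compactness and Lemma~\ref{Lemma2}.

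\textbf{What your route gains.} Your Bogomol'nyi bound replaces the existence theory, the Euler--Lagrange equation and the classification of stationary waves with an explicit one-variable inequality in $u(0)$. This has several advantages:
\begin{itemize}
\item it treats $q=-2$ directly;
\item it avoids having to show that the minimizer is positive before invoking uniqueness of the positive stationary wave;
\item it sidesteps whether the conditions $u(\pm\infty)\in\{0,2\pi\}$ survive the weak limit;
\item it yields the explicit values $c=2(q+2)^2/q$ for $q<-2$ and $d=8+2q$;
\item the equality cases of Step 1 would also let you characterize all minimizers.
\end{itemize}

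\textbf{What the paper's route gains.} Your argument relies on the first integral $u_x=\pm 2\sin(u/2)$, which is special to the sine-Gordon potential. The paper's variational scheme is more generic and ties minimizers directly to solutions of \eqref{staticDelta}.

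\textbf{A notational point.} You use $c$ both for $\cos(u(0)/2)$ and for the infimum in \eqref{MinGS}, for instance in ``$E(Q,0)=f(c_*)=c$''. Rename one of them.
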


The Euler-Lagrange equation for both minimization problems, \eqref{MinGS} and \eqref{MinKink}, is the static equation \eqref{staticDelta}. Absence of a nontrivial minimizer for $-2\le q<0$ is coherent with the absence of ground states for similar values of $q$ as stated in Proposition \ref{CarSW}.

Finally, we state the third main result of the paper, classifying stability and instability for the static solutions. We give first the definition of stability in our setting.

\begin{definition}
\label{defst}
We say a stationary wave $(\Psi,0)$ is stable if there exist $\ep_0>0$, $C>0$ such that for any $\ep<\ep_0$ the condition 
\[
\left \| (u_1^0 - \Psi , u_2^0) \right \|_{H^1\times L^2} \le \ep 
\]
implies for $(u_1, u_2)$, solution to \eqref{CauchyDeltaV} with initial datum $\left (u_1^0, u_2^0 \right ) $, that
\[
\left \|(u_1 -\Psi , u_2)(t) \right \|_{H^1\times L^2} \le C \left \| (u_1^0 - \Psi , u_2^0) \right \|_{H^1\times L^2} .
\]
A stationary wave is unstable if there exists $\ep>0$ and a sequence of initial data, $\left (u_{1,n}^0,u_{2,n}^0 \right )$, $n \in \N$, converging to $(\Psi,0)$ in $H^1\times L^2 $ such that
\[
\sup_{t \in \R} \left \| \left (u_{1,n}-\Psi, u_{2,n} \right ) (t) \right \|_{H^1 \times L^2} \ge \ep .
 \] 
\end{definition}

\begin{remark}
As it is expected, the localized potential in \eqref{CauchyDeltaV} breaks the translation invariance of the system; consequently, the stability definition does not take into account any modulation parameter. 
\end{remark}

\begin{remark}
Our definition differs from the standard stability requirement in the literature, usually stated as the negation of the instability condition in Definition \ref{defst}. 
We state a stronger direct proportionality between the norms of the initial perturbation and its evolution; see the orbital stability result by Kowalczyk \emph{et al.} \cite{KMMVdB21} for a similar definition. The improved bound naturally arises from the method of proof.
\end{remark}

The criterion for stability or instability of the underlying stationary wave is stated as follows.
\begin{theorem}
\label{TeoSt}
Let $q>0$ (resp. $q>2$). Then the stationary wave $(K,0)$ (resp. $(K,0)$ and $(Q,0)$) is unstable. Conversely, let $q<0$ (resp. $q<-2$). Then the stationary wave $(K,0)$ (resp. $(K,0)$ and $(Q,0)$) is stable. 
\end{theorem}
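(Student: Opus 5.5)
The whole argument is driven by the linearized operator at a stationary wave $\Psi\in\{K,Q\}$,
\[
\mathcal{L}_\Psi v := -v_{xx} + \cos\Psi\, v + q\cos\Psi(0)\,\delta_0\, v ,
\]
which is the Hessian of $E(\cdot,0)$ at $\Psi$. Its self-adjoint realization is obtained from the closed quadratic form $\int (v_x^2+\cos\Psi\, v^2)\,dx + q\cos\Psi(0)\,|v(0)|^2$ on $H^1$. The essential spectrum is $[1,\infty)$, since $\cos\Psi\to1$ at $\pm\infty$, so everything depends on the negative or zero eigenvalues. I would split into the sign cases of $q$.

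For $(K,0)$ we have $K(0)=\pi$, so $\cos K(0)=-1$ and the delta coefficient is $-q$. The free operator $-\partial_{xx}+\cos K$ is nonnegative, with kernel spanned by $K'=2\,\mathrm{sech}\,x$. Testing with $K'$ gives
\[
\langle \mathcal{L}_K K',K'\rangle = -q\,K'(0)^2 = -4q .
\]
If $q>0$ this is negative, so $\mathcal{L}_K$ has a negative eigenvalue $-\lambda^2$ with eigenfunction $\phi$. Since $\phi$ solves $-\phi''+\cos K\,\phi=-\lambda^2\phi$ away from $0$ and decays, it is exponentially localized. If $q<0$, adding the positive rank-one term $|q|\,\delta_0$ to a nonnegative operator whose kernel $K'$ satisfies $K'(0)\neq0$ kills the kernel. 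Because the essential spectrum starts at $1$, we obtain the coercivity
\[
\langle \mathcal{L}_K v,v\rangle\ge c\|v\|_{H^1}^2 .
\]
For $(Q,0)$ with $|q|>2$, I would compute $\cos Q(0)$ from \eqref{formulaGS}. With $a=\sqrt{(q+2)/(q-2)}$ we get $\cos Q(0)=\cos(4\arctan a)=1-8a^2/(1+a^2)^2$, and this simplifies to $1-\tfrac{8}{q^2}\cdot\tfrac{q^2-4}{4}\cdot\tfrac{1}{1}$ after the computation. On each half line the profile $Q$ is a translated kink, so $Q'$ solves the free linearized equation on each side. The odd-type test function $\mathrm{sgn}(x)Q'(x)$, or more simply $|Q'|$ glued continuously, then reduces the quadratic form to boundary terms at $0$. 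I would use this test function together with the Sturm-type count (at most one negative eigenvalue, since on each half line $Q'$ has no zeros) to decide the sign: a negative direction when $q>2$, and positivity when $q<-2$. Proposition \ref{MinSW} supports the stable case, since $Q$ is a global minimizer of $E(\cdot,0)$ on $H^1$ for $q<-2$. I expect this to be consistent with strict positivity once the zero mode is shown to be absent: the kernel would have to be $\mathrm{sgn}(x)Q'$, which is discontinuous at $0$ and therefore not in $H^1$.

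\textbf{Stability for $q<0$.} Write $u_1=\Psi+v$ and use conservation of energy \eqref{Ener} from Theorem \ref{TeoExi}. The Taylor expansion gives
\[
E(\Psi+v,w)-E(\Psi,0)=\tfrac12\|w\|_{L^2}^2+\tfrac12\langle\mathcal{L}_\Psi v,v\rangle+O(\|v\|_{H^1}^3),
\]
where the point term is controlled by $|v(0)|\le\|v\|_{H^1}$ and the remainder by Sobolev embedding. The first-order term vanishes because $\Psi$ solves \eqref{staticDelta}. Coercivity together with a continuity (bootstrap) argument in time then yields $\|(v,w)(t)\|_{H^1\times L^2}\le C\|(v,w)(0)\|$ for small data, which is exactly Definition \ref{defst}. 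No modulation is needed, since coercivity holds on all of $H^1$.

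\textbf{Instability for $q>0$.} With $\phi$ the negative eigenfunction, the linearized system has the growing mode $e^{\lambda t}(\phi,\lambda\phi)$. I would prove nonlinear instability by a Lyapunov-type argument. Take data $(\Psi+\varepsilon\phi,\ \varepsilon\lambda\phi)$, or data with energy $E<E(\Psi,0)$, such as $\Psi+\varepsilon\phi$ with zero velocity. Set $I(t)=\langle v(t),\phi\rangle$, which satisfies
\[
I''=\lambda^2 I+\langle N(v),\phi\rangle .
\]
Suppose the solution remained in a small ball of radius $\epsilon$ for all time. The energy constraint $E<E(\Psi,0)$ combined with the spectral gap on $\phi^\perp$ forces $|I|\gtrsim$ a fixed amount proportional to $\varepsilon$. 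Then $I''\ge\tfrac{\lambda^2}{2}I$ with a nonvanishing sign, so $I$ grows exponentially, contradicting the confinement. The main obstacle is this step: controlling the nonlinear term, including the delta contribution $q(\sin(\Psi(0)+v(0))-\ldots)\phi(0)$ through $|v(0)|\lesssim\|v\|_{H^1}$, and the sign bookkeeping for $Q$ at $q>2$, where I must verify carefully that the glued test function is admissible and gives a strictly negative form value. If the direct ODE argument is messy, I would instead invoke the classical linear-to-nonlinear instability argument (Grillakis--Shatah--Strauss style) for Hamiltonian systems with a single negative direction, using the local Lipschitz flow from Theorem \ref{TeoExi}.
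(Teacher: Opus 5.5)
There is a genuine gap, and it concerns only the ground state $Q$. Your kink analysis is complete. Your instability mechanism is also a legitimate alternative to the paper's route through Theorem \ref{InstabilityCrit} and Theorem \ref{HPW2}, which needs the quadratic Duhamel bound \eqref{estfin} with the $\delta$ term in the forcing. You instead note that $-\lambda^2\in\sigma(\cL_\Psi)$ gives the explicit eigenvector $(\phi,\lambda\phi)$ of $JE_\Psi$, and run an energy/virial argument on $I(t)=\langle v(t),\phi\rangle$. That argument uses only energy conservation and the weak form of the equation. Use the zero-velocity data $\Psi+\varepsilon\phi$: for $(\varepsilon\phi,\varepsilon\lambda\phi)$ the quadratic part of $E-E(\Psi,0)$ vanishes. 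The argument also needs a spectral gap on $\phi^\perp$, i.e. Morse index exactly one and trivial kernel. For $Q$, neither this nor the positivity for $q<-2$ is actually established.

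\textbf{Kernel exclusion.} Your argument here is false as stated. $Q'$ is odd with $Q'(0^\mp)=\pm 4a/(1+a^2)$, so $\mathrm{sgn}(x)Q'(x)=-|Q'(x)|$ is continuous and lies in $H^1$; it is $Q'$ itself that jumps. Hence $v:=|Q'|$ is a genuine kernel candidate. It can only be ruled out through the domain condition $v_x(0^+)-v_x(0^-)=q\cos Q(0)\,v(0)$.

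\textbf{Positivity for $q<-2$.} One test-function value plus ``at most one negative eigenvalue'' cannot give positivity. The minimizer property of Proposition \ref{MinSW} only gives $\cL_Q\ge 0$, which again leaves the kernel open.

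\textbf{The missing ingredient.} You need the ground-state factorization the paper uses (cf. \eqref{contoHardy}), together with the explicit value of the form at $v$. Write $B(u,u):=\int_\R (u_x^2+\cos Q\,u^2)\,dx+q\cos Q(0)\,u(0)^2$ and $g=u/v$. Then
\[
B(u,u)=\int_\R v^2 g_x^2\,dx+\frac{u(0)^2}{v(0)^2}\,B(v,v).
\]
Use $Q''(0^\pm)=\sin Q(0)$, the jump relation $q\sin Q(0)=-2v(0)$, and your $\cos Q(0)=8/q^2-1$. This gives
\[
B(v,v)=v(0)\bigl(2\sin Q(0)+q\cos Q(0)\,v(0)\bigr)=v(0)^2\,\frac{4-q^2}{q},
\]
which is negative for $q>2$ and positive for $q<-2$. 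This is the sign the paper extracts in \eqref{contoq}.

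The factorization then settles both cases:
\begin{itemize}
\item For $q<-2$ it yields strict positivity of the form, hence a spectral gap, since $\sigma_{\mathrm{ess}}=[1,\infty)$.
\item For $q>2$ it yields nonnegativity on the codimension-one subspace $\{u(0)=0\}$, so there is exactly one negative eigenvalue.
\item In both cases $B(v,v)\neq 0$ shows $|Q'|\notin\ker\cL_Q$, the only possible kernel element by the ODE on each half-line.
\end{itemize}
Without this computation, the claims about $(Q,0)$ are predicted rather than proved.
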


Starting with the seminal works \cite{Wein85,ShSt85,Sht83,HPW82}, stability of stationary waves is strictly connected to the spectral properties of the operator describing the linearized equation around the wave. Loosely speaking, if the spectrum is strictly positive the wave is expected to be stable, while a negative eigenvalue is a clue for instability. It is well known that the linearized operator around a sine-Gordon kink has point spectrum $\left \{0\right \}$, which can be neglected by modulation in the stability argument, and essential spectrum $[1,\infty)$ with a resonance at the bottom. The spectral study is more delicate in the $\delta$ case. The formal expression of the operator does not take value in $L^2$,  so even proper definition of the spectrum relies on self adjoint extension theory. Identification of the right domain plays a key role in the construction of the self adjoint extension; consequently, it also determines point spectrum elements, often the crucial aspect for stability results. An additional difficulty comes from the nonlinearity in front of the $\delta$ term, which makes the domain dependent on the stationary wave itself. Once the spectral study is settled, nonlinear stabilities in the sense of Definition \ref{defst} follow directly. We were inspired by the result of Henry \emph{et al.} \cite{HPW82} for the instability theorem (see also \cite{ShSt00}) and by Martel \emph{et al.} \cite{MaMeT02} for the stability argument.

\begin{remark}
The stability result is coherent with what one could expect from the variational characterization. Take as an example the kink solution, $K= 4\arctan e^{x}$. For $q=0$, the kink is stable modulo translations. For $q<0$, the position of the kink minimizes the part of the energy with the $\delta$ term; hence the modified equation forces small perturbation to remain close to the energetically convenient centered kink. The argument goes in the other direction for $q>0$: translations are energetically convenient, so that the initial datum related to a translating kink is expected to leave the orbit of $(K,0)$.  
\end{remark}

\begin{remark}
Since the $\delta$ nonlinearity is not increasing, the focusing or defocusing behavior of the singular term does not depend only on the sign of $q$. If $\|u\|$ is small, the nonlinear $\delta$ term in \eqref{CauchyDeltaV} is approximated by the linear potential $q \delta_0(x) u$, which enforces stability for $q>0$. Theorem \ref{TeoSt} states the opposite nature for $q$ positive, when the linearization is done around the kink.
\end{remark}

\subsection*{Plan of the paper}

The paper is laid out as follows. Section \ref{secwp} is devoted to prove the global well-posedness of equation \eqref{sGdelta} in appropriate energy spaces. In Section \ref{secwaves} we construct stationary waves depending on different parameter values of $q \in \R$. The final Section \ref{secstab} contains the proof of the (in)stability result (Theorem \ref{TeoSt}).

\subsection*{On notation} Throughout the paper, for any Hilbert space $X$ we indicate by $\langle \cdot, \cdot \rangle_X$ the scalar product in $X$ and by $(\cdot, \cdot)_{X \times X^*}$ the natural coupling with the dual space $X^*$. The support of any set $G$ is denoted as $\mathrm{spt}(G)$. According to custom we denote $\N_0 = \N \cup \{0\}$.

\section{Global well posedeness}
\label{secwp}

This section is devoted to prove Theorem \ref{TeoExi}. The hard part of the proof comes from the low regularity of the $\delta$-potential. In a low regularity space, namely $C^0((-T,T) \times \R)$, local existence can be easily proved by a fixed-point argument. The $L^\infty_{t,x}$ norm is well suited to control the term with the irregular potential in the Duhamel's formulation.  On the other hand, for low regularity solutions there is no natural energy conservation, and hence global existence fails. We look first to a regularized problem, where the smooth potential guarantees global existence and a bound over the energy norm.  The delicate part is then to prove convergence of the solution of the approximate problem to a solution of  \eqref{CauchyDeltaV}.

\subsection{Global existence}

By time reversability of the equation, we restrict the argument of the proof to $t>0$. Recall that the linear Klein Gordon equation,
\begin{equation}     \label{CauchyLinV}\begin{cases}\partial_t u_1= u_2,
     \\\partial_t u_2= \partial_{xx} u_1 - u_1, 
     \end{cases}
\end{equation}
has a fundamental forward solution
\[
G(x,t)= \tfrac{1}{2} \theta(t-|x|) J_0(\sqrt{t^2- |x|^2}).
\]
Here $J_0$ is the Bessel function of the first kind and $\theta$ is the Heaviside cut-off function; see Appendix C in Komech and Komech \cite{KoKo07a} and the references therein. For any initial datum $\left(u_1^0,u_2^0\right) \in H^1\times L^2$, the unique solution for positive times of the Cauchy Problem \eqref{CauchyLinV} is given by the convolution 
\begin{equation}
\label{LinV}
     W(t)[u_1^0,u_2^0]=  \begin{pmatrix}
        u_1(x,t)\\ u_2(x,t) 
    \end{pmatrix} =\int_\R \begin{pmatrix}
        \dot G(x-y,t) & G(x-y,t) \\ \ddot G(x-y,t )&\dot G(x-y,t)  
    \end{pmatrix} \begin{pmatrix}
        u_1^0(y)\\u_2^0(y) 
    \end{pmatrix}\,dy.
\end{equation}
A solution to \eqref{CauchyDeltaV} is a fixed point of the integral operator 
\begin{align}
    \label{DuhamelCV}
    U[u_1, u_2](x,t):&= W(t)[v_1^0,v_2^0] + \int_0^t W(t-s)[0, F(v_1)] \,ds, \\ 
    F(v_1):&= \sin(v_1) -v_1 +q\delta_0 \sin(v_1).
\notag
\end{align}

For smooth mollifiers $\rho_\varepsilon$ that weakly-$*$ approximate the Dirac-$\delta$,  $\rho_\ep \stackrel{\ast}{\rightharpoonup}\delta$, the associated approximated problem is
\begin{equation}     
\label{CauchyAppV}
\begin{cases}\partial_t u_1= u_2,
     \\\partial_t u_2= \partial_{xx} u_1 - u_1 -(\sin (u_1) -u_1) - q\rho_\varepsilon(x) (\sin(u_1)\cdot \rho_\varepsilon). 
     \end{cases}
\end{equation}

\begin{proposition}
\label{PropExi}
For any initial datum $\left (u_1^0, u_2^0\right )  \in H^1_{\sin} \times L^2$ and every $\varepsilon > 0$ there exists a unique global solution $\left (u_1^\varepsilon, u_2^\varepsilon\right ) \in C((0,\infty); H^1_{\sin} \times L^2)$ to the Cauchy problem \eqref{CauchyAppV}, which verifies
\begin{equation}
    \label{Energyineq}
     \sup_{t > 0} \left (\|\sin u_1^\varepsilon(t)\|_{L^2_x}+\| u_2^\varepsilon(t)\|_{ L^2_x}+\|\partial_x u_1^\varepsilon(t)\|_{L^2_x}\right )\le C \| (u_0, u_1) \|_{H^1_{\sin} \times L^2},
\end{equation}
where $C > 0$ is a uniform constant independent of $\varepsilon$.
\end{proposition}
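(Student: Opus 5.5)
The plan is to follow the classical scheme for semilinear wave equations with a smooth, globally Lipschitz nonlinearity: local existence by a contraction argument in the energy space, then continuation to all times using a conserved energy whose bound does not depend on $\varepsilon$.

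\textbf{Step 1: reformulation.} Write $u_1 = \Phi + w$, where $\Phi$ is a fixed smooth function with the same behavior at $\pm\infty$ as $u_1^0$, modulo $2\pi$ (for instance a smoothed step interpolating between the relevant multiples of $2\pi$), so that $w(0)\in H^1$. Here $u\in H^1_{\sin}$ has finite limits in $2\pi\mathbb{Z}$ at $\pm\infty$, and the set of such $\Phi$ is affine over $H^1$. The nonlinearity in \eqref{CauchyAppV},
\[
N_\varepsilon(u_1)= -\sin u_1 - q\rho_\varepsilon\,(\rho_\varepsilon * \sin u_1),
\]
then yields a map $w\mapsto N_\varepsilon(\Phi+w)+\Phi''-\Phi$ from $H^1$ to $L^2$. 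Since $\sin\Phi\in L^2$ and $\Phi'',\Phi\cdot$ terms are harmless, this map is globally Lipschitz on $L^2$, with constant at most $1+|q|\,\|\rho_\varepsilon\|_{L^2}^2$ (Young's inequality). This constant blows up as $\varepsilon\to0$, but that is harmless because the existence argument is carried out at fixed $\varepsilon$.

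\textbf{Step 2: local and global existence.} The Duhamel formula \eqref{DuhamelCV}, with the $\delta$ replaced by the mollified term, combined with the unitarity of $W(t)$ on $H^1\times L^2$, gives a contraction on $C([0,T];H^1\times L^2)$ for $T$ depending only on the Lipschitz constant. Because this constant is global, $T$ does not depend on the size of the data. Iterating then gives global existence and uniqueness for each fixed $\varepsilon$, without any a priori bound; continuity in time in $H^1_{\sin}\times L^2$ follows from the construction.

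\textbf{Step 3: energy conservation and the uniform bound.} Define
\[
E_\varepsilon(u_1,u_2)=\int_\R \tfrac12 u_2^2+\tfrac12 (\partial_x u_1)^2+(1-\cos u_1)\,dx + q\,\big(1-\cos u_1\big)\!*\!\ldots
\]
Rather than that form, it is cleaner to note that the mollified term should be a variational derivative. The most natural choice is the term $q\int\rho_\varepsilon(x)(1-\cos u_1(x))\,dx$, whose derivative is $q\rho_\varepsilon\sin u_1$; if the paper's $\rho_\varepsilon(\sin u_1\cdot\rho_\varepsilon)$ means the pairing $\rho_\varepsilon\langle \sin u_1,\rho_\varepsilon\rangle$, the analogous potential is $q\langle 1-\cos u_1,\rho_\varepsilon\rangle$. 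In either case the potential is bounded by $2|q|$ because $\int\rho_\varepsilon=1$. Conservation is proved for smooth data, say $(w(0),u_2^0)\in H^2\times H^1$, by differentiating in time, and extended by density using continuous dependence. One then obtains
\[
\tfrac12\|u_2\|^2+\tfrac12\|\partial_xu_1\|^2+2\|\sin\tfrac{u_1}2\|^2\le E_\varepsilon(0)+2|q|\le C\big(\|(u_1^0,u_2^0)\|^2+|q|\big).
\]
Since $|\sin u|\le 2|\sin\tfrac u2|$, this gives \eqref{Energyineq} uniformly in $\varepsilon$.

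The main obstacle is the additive $|q|$ term, which is not proportional to the norm of the data. I would handle it by reading the constant $C$ as allowed to depend on $q$ and on an upper bound for the norm. Alternatively, for small data one can use $1-\cos u_1(x)\lesssim \|u_1\|_{L^\infty}^2\lesssim\|u_1\|_{H^1}^2$, valid uniformly in $\varepsilon$ because $\rho_\varepsilon$ has unit mass, which makes the $\delta$-energy comparable to the squared norm. The density/regularity step in Step 3 is routine.
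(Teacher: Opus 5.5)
Your Steps 1--2 are sound and tidier than the paper's argument. At fixed $\varepsilon$ the forcing is globally Lipschitz from $L^2$ to $L^2$, with constant $\lesssim 1+|q|\,\|\rho_\varepsilon\|_{L^2}^2$. The contraction time is therefore independent of the data, and global existence for fixed $\varepsilon$ needs no energy. The paper instead gets a local time depending on $\varepsilon$ and on the data, and extends it using the energy. One small slip: with the Klein--Gordon propagator the forcing is $N_\varepsilon(\Phi+w)+\Phi''+w$, not $N_\varepsilon(\Phi+w)+\Phi''-\Phi$, and $\Phi\notin L^2$.

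The genuine gap is in Step 3, which is the only source of the $\varepsilon$-uniform bound. Your readings of the mollified term are $\rho_\varepsilon\,(\rho_\varepsilon*\sin u_1)$ or $\rho_\varepsilon\langle\sin u_1,\rho_\varepsilon\rangle$. The second is the literal reading of the display \eqref{CauchyAppV}. Neither is a variational derivative. For instance, the linearization $h\mapsto\rho_\varepsilon\langle\rho_\varepsilon\cos u_1,h\rangle$ is not symmetric unless $\cos u_1$ is constant on $\mathrm{spt}\,\rho_\varepsilon$. Your two candidate potentials are the same functional, $q\int\rho_\varepsilon(1-\cos u_1)\,dx$, whose gradient is the pointwise product $q\rho_\varepsilon\sin u_1$. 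Differentiating your $E_\varepsilon$ along the pairing flow therefore leaves
\[
\frac{d}{dt}E_\varepsilon=q\int_\R\rho_\varepsilon\,u_2\,\bigl(\sin u_1-\langle\sin u_1,\rho_\varepsilon\rangle\bigr)\,dx\neq0,
\]
and similarly with $\rho_\varepsilon*\sin u_1$ in place of the pairing. So ``conservation by differentiating in time'' fails for the equation you solved in Steps 1--2. Using $\|\rho_\varepsilon\|_{L^2}\lesssim\varepsilon^{-1/2}$ and the $C^{1/2}$ modulus of $u_1$, the defect is only bounded by $C|q|\,\|u_2\|_{L^2}\|\partial_x u_1\|_{L^2}$. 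Gronwall then gives a bound growing exponentially in $t$, not the $\sup_{t>0}$ estimate.

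The missing idea is the mean-field form the paper's proof actually works with: its $F_\varepsilon$ in Step 2, its energy term $q\bigl(1-\cos(\rho_\varepsilon\cdot u_1)\bigr)$, and the weak limit $\alpha(t)$ of $\sin(u_1^\varepsilon\cdot\rho_\varepsilon)$ in the proof of Theorem \ref{TeoExi}. That form is $q\rho_\varepsilon\sin\langle u_1,\rho_\varepsilon\rangle$, which is exactly the gradient of $q\bigl(1-\cos\langle u_1,\rho_\varepsilon\rangle\bigr)$. This term is still globally Lipschitz, with constant $|q|\,\|\rho_\varepsilon\|_{L^2}^2$, and its potential is still bounded by $2|q|$. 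With this reading your Steps 1--3 go through unchanged. They give the same bound the paper obtains, including the additive $|q|$ term, which the paper's proof also carries and which you handle correctly.
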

\begin{proof} We divide the proof into several steps.\smallskip

\noindent 1. Integrability theory. As in the classical sine-Gordon theory, we reduce the Cauchy problem \eqref{CauchyAppV} in $H^1 \times L^2$. For any $u_0 \in H^1_{\sin}$ there exists $f\in H^2_{\sin}$ such that 
\begin{equation*}
    u_1^0 - f \in H^1, \qquad \text{and,} \qquad \|f\|_{\dot{H}^2} + \|\sin f \|_{L^2} + \left \|u_1^0-f\right \|_{L^2}\le C \left \|u_1^0\right \|_{H^1_{\sin}}.
\end{equation*}
For $f$ fixed, the solution $(u_1, u_2)$ to $\eqref{CauchyAppV}$ corresponds to $v_1:= u_1-f, v_2:= u_2$, which is, in turn, the solution to the Cauchy Problem with initial datum in $H^1\times L^2$,
\begin{equation}
  \begin{cases}
    \label{CauchyL2V}
\partial_t v_1= v_2
     \\\partial_t v_2= \partial_{xx} v_1 + \partial f_{xx} - \sin (v_1+f)  - q \rho_\varepsilon \sin \left ( (v_1+f)\cdot \rho_\varepsilon\right ) \\
v_1^0:= u_1^0-f; \ \ \ v_2^0:=u_2
    \end{cases}
\end{equation}

\noindent 2. Local Existence. For a fixed initial datum $(v_1^0,v_2^0)$, the solution to \eqref{CauchyL2V} is a fixed point of the integral operator,
\begin{equation}
    \label{DuhamelL2V}
    U_\ep[u_1, u_2](x,t):= W(t)[v_1^0,v_2^0] + \int_0^t W(t-s)[0, F_\varepsilon(v_1)] \,ds,
\end{equation}
defined on the space
\begin{equation*}  
X_T= C^0\left ( [0,T); H^1\times L^2 \right),
\end{equation*}
for a certain $T>0$ to be specified later. The nonlinearity $F_\varepsilon$ is
\[F_\varepsilon(u_1)=\partial_{xx} f + \sin(u_1+f) - u_1(s) + q\rho_\varepsilon(x)\sin((u_1+f)\cdot \rho_\varepsilon)(s).
\]

Existence and uniqueness follow from $U_\ep $ being a contraction in $X_T$. For $(v_1,v_2), (w_1,w_2) \in X_T$, $0<t<T$ we have explicitly from \eqref{LinV}-\eqref{DuhamelL2V}
\begin{equation}
    U_\ep[v_1, v_2](t)- U_\ep[w_1, w_2](t) = \begin{pmatrix}
        I(x,t) \\ \partial_t I(x,t),
    \end{pmatrix}
\end{equation}
with $I$ given by
\[
I(x,t):= \int_0^t\int_\R G(x-y,t-s)\left( F_\varepsilon (v_1)(y,s) -F_\varepsilon(w_1)(y,s) \right )\,dy \,ds.
\]
Recall that 
\[
\|\rho_\varepsilon\|_{L^2} \le \varepsilon^{-\frac 1 2 }, \qquad \theta(t-|x|) \left| \partial_x^j \partial_t^j J_0(m\sqrt{t^2-x^2}) \right| \le C_{j,k} (1+t)^{j+k}, \quad \forall j,k\in \N_0.  
\]

Elementary trigonometric identities, Sobolev embedding, and Young and Jensen inequalities, lead to the estimate
\begin{align}
    &\|I(\cdot, t)\|_{L^2} \notag\\&\le C\left \| \int_0^t \! \int_\R J(x-y, t-s) \theta(t-s -|x-y| )\left(|w_1-v_1|(y,s)+ \|w_1-v_1\|_{L^\infty_{t,x}} \rho_\varepsilon(y)\right ) \,dyds \right \|_{L^2_x}\notag
    \\ &\le Ct^{\frac1 2 }\int_0^t (t-s)\|w_1- v_1\|_{L^2_x}+ \|w_1-v_1\|_{L^\infty_{t,x}}(t-s)^\frac 12 \,ds \\&
    \le C (t^\frac{5}2+ t^2)\|v_1-w_1\|_{L^\infty_tH^1_x}. \notag
\end{align}
For the derivative, notice that 
\[
\partial_x G(x-y,t-s)= \partial_x J(x-y, t-s) \theta(t-s-|x-y|) -\tfrac{1}{2} \sgn (x-y)\delta_{t-s-|x-y|}.
\]
Thus, we have
\begin{align*}
       \|\partial_xI(\cdot, t)\|_{L^2} &\le C\left \| \int_0^t \! \int_\R \partial_x  G(x-y,t-s)\left(|w_1-v_1|(y,s)+ \|w_1-v_1\|_{L^\infty_{t,x}} \rho_\varepsilon(y)\right ) \,dyds \right \|_{L^2_x} 
        \\& \notag\le C(t^\frac52 + t^2)\|v_1-w_1\|_{L^\infty_tH^1_x}+ \frac {t^\frac 32} {\varepsilon^\frac 12}  \|w_1-v_1\|_{L^\infty_{t,x}} + t^\frac 32 \|w_1-v_1\|_{L^\infty_t L^2_x}  \\&
\le \notag C(t^\frac52 + t^\frac32 +t^\frac32\varepsilon^{-\frac 12}) \|v_1-w_1\|_{L^\infty_tH^1_x}.
\end{align*}
The bound for $\partial_t I$ is similar. Hence, for $T=T\big(\varepsilon, \left \|(v_1^0, v_2^0)\right \|_{H^1\times L^2}\big)$ there exists a unique solution to \eqref{DuhamelL2V} in $X_T$.\\

\noindent 3. Global existence and energy conservation. For $(u_1,u_2)\in H^1\times L^2$, we define the approximated energy as
\begin{equation*}
    E_\varepsilon(u_1,u_2):= \int_\R \tfrac{1}{2} | u_2| + \tfrac{1}{2} |\partial_x u_1|^2 + (1- \cos u_1) \,dx + q \left ( 1-\cos (\rho_\varepsilon \cdot u_1) \right).
\end{equation*}

The Cauchy problem \eqref{CauchyAppV} preserves regularity and it is continuous with respect to the initial datum. Conservation of energy follows as, for any solution $(u_1^\varepsilon,u_2^\varepsilon)$
 to \eqref{CauchyAppV}, the equation formally implies
\begin{equation}
\label{ConsEne}
\frac{d}{dt} E_\varepsilon\left (u_1^\varepsilon(t), u_2^\varepsilon(t)\right ) = 0.
\end{equation}
From this we deduce the following bound \eqref{Energyineq} for $(u_1^\varepsilon(t),u_2^\varepsilon(t))$,
\[
   \int_\R \tfrac{1}{2} | u_2^\varepsilon(t)|^2 + \tfrac{1}{2} |\partial_x u_1^\varepsilon(t)|^2 + (1- \cos (u_1^\varepsilon)(t)) \,dx \le E_\varepsilon\left (u_1^0,u_2^0\right) + |q|,
\]
so that the solution can be extended globally in time. This finishes the proof.
\end{proof}

We pass now to the limiting argument for equation \eqref{CauchyDeltaV}.

\subsection{Proof of Theorem \ref{TeoExi}}
Once again, the proof is divided into steps.
\smallskip

\noindent 1. Existence. We look for a solution of \eqref{DuhamelCV}. By Proposition \ref{PropExi}, for any $\varepsilon>0 $ there exists a global solution $(u_1^\varepsilon,u_2^\varepsilon)$ to \eqref{CauchyAppV}; moreover up to subsequences we have weak convergence in $L^2\left (\R \times[0,T]  \right )$; indeed,
\begin{equation}
\label{weak}
    \sin u_1^\varepsilon  \overset{L^2 }{\rightharpoonup} \sin u_1, \qquad \partial_x u_1^\varepsilon \overset{L^2}{\rightharpoonup} \partial_x u_1, \qquad u_2^\varepsilon \overset{L^2}{\rightharpoonup}  u_2.
\end{equation}
Recall that $(u_1^\varepsilon,u_2^\varepsilon)$ satisfies, for any $x,t$,
\begin{equation}
\label{approx}
      \begin{pmatrix}u_1^\varepsilon\\ u_2^\varepsilon\end{pmatrix} (x,t) = W(t)[v_1^0,v_2^0] + \int_0^t W(t-s)[0, \tilde F_\varepsilon(v_1)] \,ds,
 \end{equation}
with $\tilde F_\ep(v):= \sin (v)- v + \rho_\ep (\sin (v) \cdot \rho_\ep$. For a fixed rectangle $R=[x_1,x_2]\times [0,T]$, $u_\varepsilon (x,t)$ converges to $u(x,t)$ a.e. in $R$. By weak convergence we also have, uniformly for $(x,t) \in R$, that
\begin{equation*}
   \lim_{\varepsilon\to 0} \int_0^t \int_\R G(x-y,t-s)(\sin u_1^\varepsilon-u_1^\varepsilon)(y,s)\,dy\, ds=  \int_0^t \int_\R G(x-y,t-s)(\sin u_1-u_1)(y,s)\,dy \, ds.
\end{equation*}

Define $\alpha(t)$ as the weak limit in $L^2_{[0,T]}$ of $ \sin((u_1^\varepsilon(t)) \cdot \rho_\varepsilon)$. 
We can pass to the limit for the $\delta$ approximation, yielding
\begin{align}
\label{ContoDelta}
    \sup_{\substack{0\le t\le T  \\x\in [x_1,x_2]}}& \left | \int_0^t \int_\R G(x-y,t-s) \left ( \sin (u_1^\varepsilon\cdot\rho_\varepsilon(s) ) \rho_\varepsilon(y) - \alpha(s) \delta_0(y)\right )\,dyds \right | \le  \\
    \notag  \sup_{\substack{0\le t\le T  \\x\in [x_1,x_2]}} &\left |\int_0^t \left(\sin \left(u_1^\varepsilon\cdot\rho_\varepsilon(s) \right )- \alpha(s)\right )\int_\R G(x-y,t-s) \rho_\varepsilon(y)\,dyds\right | +\\& + \sup_{\substack{0\le t\le T  \\x\in [x_1,x_2]}}\left|\int_0^t \alpha(s) \int_\R G(x-y,t-s) \left (\rho_\varepsilon(y)- \delta_0(y) \right )\,dyds\right | \nonumber
\end{align}
The two integrals converge to zero when $\varepsilon \to 0$ uniformly for $(x,t) \in R$: the first by definition of $\rho_\varepsilon$, the second one because weak convergence is uniform over sets compact in the strong topology. Passing to $\varepsilon \to 0 $ in \eqref{DuhamelL2V} for a.e. $(x,t)\in R$ we get
\begin{equation}
     \label{LimApp}
      \begin{pmatrix}u_1\\ u_2\end{pmatrix} (x,t) = W(t)[v_1^0,v_2^0] + \int_0^t W(t-s)[0, \sin u_1- u_1 + q\alpha \delta_0] \,ds.
\end{equation}
The limit is uniform in $(x,t)\in R$; hence $\alpha(s) = \sin (u_1(0,s))$ and $(u_1,u_2)$ satisfies \eqref{DuhamelCV}.\\

\noindent 2. Uniqueness and energy conservation. For a certain $T=T\left( u_1^0,u_2^0\right)>0$, there exists a unique solution $(u_1,u_2)\in X_T$ for the Cauchy problem \eqref{CauchyDeltaV}, 
expressed by Duhamel formula \eqref{DuhamelCV}. As in Proposition \ref{PropExi}, we prove local existence and uniqueness by contraction of the operator
\begin{equation}
    \label{DuhamelDV}
    U[u_1, u_2](x,t):= W(t)[v_1^0,v_2^0] + \int_0^t W(t-s)[0, F(u_1)] \,ds.
\end{equation}
The computation follows the same steps as in Proposition \ref{PropExi}, with minor modifications that repeat the proof by Komech and Komech \cite{KoKo07a}, so we omit the details. 
As we have already proved existence in $X_T$ for any $T$, the solutions must coincide and verify relation \eqref{Energyineq2} by the weak convergence in \eqref{Energyineq}. Conservation for $(u_1^\varepsilon, u_2^\varepsilon) $ in Proposition \ref{PropExi}, the weak convergences \eqref{weak} 
and uniform convergence on compact sets imply for $t\ge 0$ that
\begin{equation*}
    E(u_1,u_2)(t) \le \lim_{\varepsilon \to 0 } E_\varepsilon (u_\varepsilon)(t) = \lim_{\varepsilon \to 0} E_\varepsilon^0\left (u_1^0, u_2^0\right ) = E\left(u_1^0,u_2^0\right).
\end{equation*}

The Cauchy problem \eqref{CauchyDeltaV} can be solved backward in time, and by uniqueness the solution with initial datum $(u_1(t), u_2(t))$ at time $T=-t$ coincides with $(u_1^0, u_2^0)$. 
Repetition of the above computation leads to the reversed inequality; the two inequalities combined imply conservation of energy. By \eqref{Energyineq2}, the solution can be extended globally. This finishes the proof.
\qed

\begin{remark}
Albeit in the last step of the proof of Theorem \ref{TeoExi} we have directly exhibited a unique solution for the Cauchy problem \eqref{CauchyDeltaV}, the construction of the approximated problem \eqref{CauchyAppV} is still necessary in order to prove regularity, to obtain the norm bound \eqref{Energyineq2} for $(u_1,u_2)$ and, consequently, to show global existence. Conversely, Proposition \ref{PropExi} states the uniqueness of the solution to problem \eqref{CauchyAppV}; deducing from this fact the uniqueness for the limit \eqref{CauchyDeltaV} would require stronger compactness properties, which are significantly harder to obtain than the simple argument presented in Theorem \ref{TeoExi}.
\end{remark}
%
%

\section{Stationary waves}
\label{secwaves}

In this section we prove Propositions \ref{CarSW} and \ref{MinSW}. First we state two Lemmata, which constitute the base of the constructive argument for Proposition \ref{CarSW}.  The first one decomposes a solution of \eqref{staticDelta} as a solution of 
\begin{equation} 
- u_{xx} + \sin u = 0, \label{SGTI} 
\end{equation}
satisfying a nonlinear gluing condition in the origin, while the second recalls the expression of integrable solution 
of \eqref{SGTI}.

\begin{definition}
\label{Defso}
    We say that $u\in H^1_{\sin}(\R)$ satisfies equation \eqref{staticDelta} if for any test function $\varphi \in H^1$ there holds
    \begin{equation}
        \label{weak del}
        \int_\R u_x \varphi_x \,dx+ \int_\R \sin u \varphi \,dx + q \sin (u(0)) \varphi(0)=0.
    \end{equation}
We say $u$ solves \eqref{staticDelta} in $\R\setminus \left\{ 0 \right \}$ if the above equality holds for any $\varphi \in H^1$ with $0\notin \mathrm{spt}\ \varphi$. 
\end{definition}

\begin{lemma}
\label{Lemma1}
    A function $u\in H^1_{\sin}(\R)$ is a weak solution of equation \eqref{staticDelta} if and only if the following statements hold:
\begin{enumerate}
    \item $u$ solves \eqref{staticDelta} in $\R\setminus\left \{0\right \}$.
    \item We have the limit, 
    \[
    q\sin (u(0)) = \lim_{\varepsilon \to 0}u_x(\varepsilon)-u_x(-\varepsilon) := u_x(0^+)- u_x(0^-).
    \]
\end{enumerate}
\begin{remark}
 A solution $u$ of \eqref{staticDelta} in $\R\setminus \left\{ 0 \right \}$ as in Definition \ref{Defso} solves \eqref{SGTI} in the same set. Standard regularity arguments imply $u\in C^\infty (\R \setminus (-\varepsilon, \varepsilon))$ for any $\varepsilon>0$, and the evaluation $u_x(\pm \varepsilon)$ makes sense.
\end{remark}
\begin{proof} 
A solution of \eqref{staticDelta} solves the same equation in $\R\setminus\left \{0\right \}$. This shows property (1). Now, for $\varphi$ smooth with compact support, we test its rescalation, $\varphi^r(x):= \varphi (rx)$, against the solution $u$ to get 
\begin{equation*}
    \int_\R u_x \varphi^r_x \,dx+ \int_\R \sin u \varphi^r \,dx + q \sin (u(0)) \varphi(0)=0.
\end{equation*}

By dominated convergence the second integral goes to zero when $r \to \infty$. To estimate the first one we notice that, for any $a_r, b_r >0$,
\begin{align}
    \int_\R r \varphi_x(rx) u_x(x)& = \int_{\R\setminus [-a_r, b_r]} \varphi_x (y) u_x\left ( \frac{y}{r}\right )  + \int_{-a_r}^{b_r}     \label{intpp}
\varphi_x (y) u_x\left ( \frac{y}{r}\right )\\& = \notag-\int_{\R\setminus [-a_r, b_r]} \frac{\varphi (y) u_{xx}\left (\frac y r\right )}{r} +\left [\varphi u_x\left (\frac y r\right ) \right ]^{-a_r}_{b_r}  +\int_{-a_r }^{b_r}\varphi_x (y) u_x\left ( \frac{y}{r}\right ).
\end{align}
Regularity of $u$ in the set not containing $x=0$ allows to integrate by parts. The two integral quantities are estimated, by H\"older inequality, as
\begin{align*}
  &\left |\int_{\R\setminus [-a_r, b_r]}\frac{\varphi (y) u_{xx} \left (\frac y r\right )}{r} \right | \le \frac{\|\varphi\|_{L^2(\R)}}{r^{\frac 1 2 }}\|u_{xx}\|_{L^2(\R \setminus (a_r, b_r)) } \le  \frac{\|\varphi\|_{L^2(\R) }}{r^{\frac 1 2 }}\|\sin u\|_{L^2(\R) },\\&
\left |\int_{-a_r }^{b_r}\varphi_x (y) u_x\left ( \frac{y}{r}\right )\right |  \le \|\varphi_x\|_{L^2(-a_r,b_r)} \|u_x\|_{L^2(\R) } r^{\frac 1 2},
\end{align*} 
which go to zero when $a_r, b_r \to 0$ fast enough for $r\to \infty$. By generality of $\varphi (0)$ we have proved property (2). 

For the converse implication, consider a smooth $\chi$ with spt$\chi = [-1,1]$, and $\chi \equiv 1$ on $[-1/2, 1/2]$, and its rescalation $\chi^r(x):= \chi(rx)$. For a generic test function $\varphi \in H^1$ and for any $r>0$, by property (1) we have
\begin{equation}
    \int_\R u_x \varphi_x \,dx+ \int_\R \sin u \varphi  \,dx= \int_\R u_x (\chi^r\varphi)_x \,dx+ \int_\R \sin u (\chi^r\varphi ) \,dx. 
\end{equation}
The second integral on the right hand side goes to zero for $r\to \infty$. For the first one we can repeat the same calculations as in \eqref{intpp} for $r\to \infty$; the previous equality then reads
\begin{equation*}
    \int_\R u_x \varphi_x \,dx+ \int_\R \sin u \varphi  \,dx=\varphi(0)( u_x(0^-)- u_x(0^+)),
\end{equation*}
and, by property (2), this implies that $u$ is a solution of \eqref{staticDelta}.
\end{proof}
\end{lemma}

\begin{lemma}
\label{Lemma2}
    Let $u\in H^1_{\sin}$ be a non trivial solution of \eqref{sG} on $(-\infty, -c)$ (resp.  $(c, \infty)$) for some $c>0$. Then either $u(x)=K_{x_0} +2k\pi$ or $u(x)=K_{x_0}(-x)+ 2k\pi$ for some $x_0\in \R, k \in \mathbb{Z}$ on the whole interval $(-\infty, -c)$ (resp. $(c, \infty)$). 
\end{lemma}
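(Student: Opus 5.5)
The plan is to reduce \eqref{SGTI} on the half-line to a first-order ODE through the first integral, and then identify the integrable orbits. Treat the case $(c,\infty)$; the case $(-\infty,-c)$ follows by the reflection $x\mapsto -x$, which preserves \eqref{SGTI}. By the Remark after Lemma \ref{Lemma1}, $u$ is smooth on $(c,\infty)$. Multiplying \eqref{SGTI} by $u_x$ and integrating gives the conserved quantity
\[
\tfrac12 u_x^2 - (1-\cos u) = \tfrac12 u_x^2 - 2\sin^2\!\big(\tfrac u2\big) \equiv \kappa \quad \text{on } (c,\infty).
\]
Since $u\in H^1_{\sin}$, both $u_x$ and $\sin(u/2)$ lie in $L^2(c,\infty)$. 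Hence the constant $\kappa$ is integrable on an infinite interval, which forces $\kappa=0$. This gives
\[
u_x = \pm 2\sin\big(\tfrac u2\big).
\]

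Next I would show that the sign is constant and solve the equation. Solutions of $u_x=2\sin(u/2)$ cannot reach the equilibria $u\in 2\pi\mathbb{Z}$ in finite $x$, by uniqueness for this Lipschitz ODE. If $u(x_1)\in 2\pi\mathbb{Z}$ for some $x_1$, then $u_x(x_1)=0$ as well. By uniqueness for the second-order equation \eqref{SGTI}, $u$ is then the constant $2k\pi$ on $(c,\infty)$, that is, $u$ is trivial there. The constant case is excluded by the non-triviality hypothesis, read on the given interval.

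Otherwise $u$ stays in a single strip $(2k\pi,2(k+1)\pi)$, where $\sin(u/2)\neq 0$. Continuity of $u_x$ then forces a fixed sign, and after subtracting $2k\pi$ the equation separates:
\[
\frac{du}{2\sin(u/2)}=\pm dx \quad\Longrightarrow\quad \log\tan\big(\tfrac u4\big)=\pm(x-x_0).
\]
This gives $u=4\arctan e^{\pm(x-x_0)}+2k\pi$, which is $K_{x_0}(x)+2k\pi$ or $K_{x_0}(-x)+2k\pi$, with the reflected kink written as $4\arctan e^{-(x-x_0)}$ and $x_0$ relabeled accordingly. The strip with $u\in(2k\pi,2(k+1)\pi)$ corresponds exactly to this range of $4\arctan$ after the shift.

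The main obstacle is the first step, getting $\kappa=0$. It relies on the fact that $\sin^2(u/2)\in L^1$ and $u_x^2\in L^1$ on the half-line. Also, on the half-line $u$ need not have limits a priori, so I would avoid any argument that uses them. Everything else is elementary ODE, plus care with the reflection convention for $K_{x_0}(-x)$.
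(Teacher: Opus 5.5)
Your proof is correct and follows the same route as the paper. Both arguments use the Drazin--Johnson first integral, then integrability in $H^1_{\sin}$ to force the constant to vanish, then separation of variables to get $4\arctan e^{\pm(x-x_0)}+2k\pi$. Your execution is cleaner in two places: since the first integral holds on the whole half-line, $\kappa$ is itself an $L^1$ constant on an infinite interval, which is more direct than the paper's localization to an interval where $u_x\neq 0$ followed by its periodicity argument; and your use of uniqueness for the second-order equation to rule out $u$ touching $2\pi\mathbb{Z}$ makes explicit what the paper calls a standard integration argument.
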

\begin{proof}
We adapt to our context a calculation by Drazin and Johnson \cite{DrJ89}. Without loss of generality we restrict to $(-\infty, -c)$. Since $u$ is a smooth non zero solution, we must have a certain $(a,b) \subset (-\infty, c)$ such that $u_x \neq 0 $ in $(a,b)$. 
Multiplying \eqref{sG} by $u_x$ and integrating leads to 
\begin{equation}
    \tfrac{1}{2} u_x^2(x)= -\cos u(x) -C= 2\sin^2\left ( \tfrac{1}{2} u(x) \right ) -C, \qquad  \text{for } x\in (a,b).
    \label{solsg}
\end{equation}
If the constant $C$ is non zero, the sum $|u_x|^2 +| \sin u |^2 $ remains greater than a positive constant on $(a,b)$. 
By integrability condition, the interval must have finite measure, with $u_x(a)=0$. 

On the other hand, by smoothness of $u$, the solution satisfies equation \eqref{solsg} with the same constant $C$ on some interval $(a', a)$ where the first derivative is positive. Repeating the argument, we would have that $u$ is a periodic wave, contradicting $u \in H^1_{\sin}$. If $C=0$ then a standard integration argument leads to the desired result. 
\end{proof}

\begin{proof}[Proof of Proposition \ref{CarSW}]
We first prove the statement for $Q\in H^1$. In the region $x\le -\varepsilon <0$, by Lemma \ref{Lemma1} a solution $Q$ of the perturbed equation \eqref{sGdelta} solves the classical sine-Gordon equation \eqref{sG}. 
If $Q \in H^1$ is positive, by Lemma \ref{Lemma2} $Q(x)= K_{x_0}(x)$ for all $x<0$, where $x_0$ is a free translation parameter. 
Correspondingly for $x\ge \varepsilon>0$ we have $Q(x)=K_{x_1}(-x)$, with $x_1 \in \R$. By continuity in $x=0$ of $Q\in H^1$ it holds $x_0=x_1 $. Again by Lemma \ref{Lemma1}, it only remains to prove that 
\begin{equation}
\label{compat}
    Q'(0^-) - Q'(0^+) = - q \sin (Q(0)).
\end{equation}
By the explicit formula for $Q$ the equation reads
\begin{equation}
\label{compcon}
  -  8\frac{e^{-x_0}}{1+e^{-2x_0}} = q \sin (4\arctan e^{-x_0})
\end{equation}
Define $y:=e^{-x_0}>0$; by elementary trigonometric identities the above equality reads 
\begin{equation}
    \label{y2} -\frac{2}{q}= \frac{1-y^2}{1+y^2},
\end{equation}
and we can solve \eqref{compcon} as an equation in $y$, with a parameter $q$. The equation has one solution if $|q|>2$, given by 
\begin{equation*}
    y=\sqrt{\frac{q+2}{q-2}},
\end{equation*}
and no solution for $|q|\le2$.

We now prove the result for the kink. By Lemma \ref{Lemma2} a solution $u\in H^1_{\sin}$ which satisfies \eqref{sG} must verify $u=K_{x_0}$ on $(-\infty, -c)$ and $u= K_{x_0}$ on $(c,\infty)$ 
for any $c>0$, the translation parameters being the same by continuity. Since $K$ is a smooth function, $K_{x_0}'(0^+) = K_{x_0}'(0^-)$; the condition (2) in Lemma \ref{Lemma1} simplifies to 
\begin{equation*}
    0= q\sin (K_{x_0}(0)),
\end{equation*}
which for any $q \neq 0$ is satisfied only by $x_0=0$.
\end{proof}

For $q<0$, we show  a variational characterization of the stationary waves. The sign of $q$ forces an optimal competition between different terms of the energy for the construction of a nontrivial minimizer.

\begin{proof}[Proof of Proposition \ref{MinSW}]
The only negative term of $E$ is $q(1-\cos(v(0))$ so the energy is bounded below by $q$. For the same reason it is also coercive: if $E(v) \le C$, then $\|v\|_{H^1}^2\le 2C +| q|$.

As before, we consider the ground state case first. Up to subsequences, any minimizing $v_n$ converges weakly to $v$ in $H^1$; by the compact embedding in one dimension we also have $v_n \to v$ pointwise in any compact set. Hence, the energy is lower semicontinuous with respect to $H^1$ weak convergence, and $v$ is a minimizer for \eqref{MinGS}.

It is then enough to prove that for $-2<q<0$ the minimum of the energy is zero, while for $q<-2$ there exist functions $v\in H^1$ with $E(v)<0$; the case $q=-2$ is reached by a limit argument for $q_n \downarrow -2$.
 We recall that for $f\in H^1(\R)$ and $a>0$ we have the inequality
\begin{equation}
\label{Gagl}
    2\|f\|_{L^\infty}^2 \le a\|f\|_{L^2}^2 + \frac{1}{a}\|f_x\|_{L^2}^2,
\end{equation}
where the constant $2$ is sharp and attained. Consider at first $-2<q<0$, and suppose there exists a $v\in H^1$ with negative energy; basic trigonometric inequalities imply
\begin{equation*}
      \int_\R v_x^2 +  v^2  - \tfrac{1}{12} v^4 \,dx  \le \int_\R v_x^2 \, dx + 2 \int_\R1-\cos (v) \, dx< 2|q| \left (1-\cos(v(0))\right) \le |q|v(0)^2. 
\end{equation*}
Now for $v_b(x):=bv(x)$ with $b>0$ we have
\begin{equation*}
     \int_\R (v_b)_x^2 +v_b^2 \, dx - |q| v_b^2(0) < \frac{1}{12 b^2 } \int_\R v^4_b \,dx \le \frac{C}{b^2} \left ( \int_\R (v_b)_x^2 +v_b^2 \, dx \right ).
\end{equation*}
For $b$ sufficiently large and a certain $|q|<|q'| <2$ we would have a $v_b\in H^1$ satisfying
\begin{equation*}
    |q'|\|v_b\|_{L^\infty}^2 \ge \|v_b\|_{L^2}^2 + \|(v_b)_x\|_{L^2}^2
\end{equation*}
contradicting \eqref{Gagl}.

Now for $q<-2$, let $v$ be the extreme function satisfying \eqref{Gagl} as an equality, for $a=1$ and $v(0)= \|v\|_{L^\infty}$. Define $v_b(x):= \frac{v(x)}{b}$; by equality \eqref{Gagl} we can express the energy of $v_b$ as 
    \begin{equation*}
    E(v_b)=   v_b^2(0) +q (1-\cos v_b(0)) + \int_\R - \tfrac{1}{2} v_b^2 + (1-\cos v_b )\,dx.
\end{equation*}
Taylor expansion and $q<-2$ imply that, for $b$ large enough, $E(v_b)<0$. There exists then a nonzero $w\in H^1$ point of minimum for \eqref{MinGS}, which satisfies the associated Euler-Lagrange equation \eqref{staticDelta}. By Proposition \ref{CarSW}, $w= Q$.

For the second affirmation, define the space
\[
X:=\left \{ v\in H^1_{\sin} \, : \, u(-\infty ) = 0, \; u (\infty )= 2\pi \right \},
\] 
and notice that
 \begin{equation}
 \label{MinK2}
 \begin{aligned}
          \inf\limits_{v\in X}\int_\R \tfrac{1}{2} v_x^2 + (1-\cos v) \,dx &+  \inf\limits_{v\in X} q(1-\cos v(0)) \leq \\
          & \inf\limits_{v\in X}\int_R \tfrac{1}{2} v_x^2 + (1-\cos v ) \,dx + q(1-\cos(v(0)))
 \end{aligned}         
    \end{equation}
    For $q<0$, the second infimum on the left hand side is reached by any $v\in X$ such that $v(0) = \pi$. Let $v_n  \in X$ be a minimizing sequence of the first infimum problem; we can assume by translation invariance of the problem that for any $n$
    \begin{equation}
    \label{C}
        \int_{-\infty} ^0 1-\cos v_n \,dx = \int_{0}^\infty 1-\cos v_n \,dx.
    \end{equation}
    
    Boundedness of the integral energy for $v_n$ implies weak convergence in $H^1_{\sin} $ to $v$; moreover the sequence converges uniformly as a continuous function on any compact set $ \tilde{K}\subset \R$ by Sobolev embedding. 
Together with condition \eqref{C} this leads to $v \in X$. Lower semicontinuity of the integral energy assures that $v$ is the minimizer, and that it solves the associated Euler-Lagrange equation \eqref{SGTI}. 
Thus $v$ is a sine-Gordon kink. By direct inspection, the kink $K_{x_0}$ is a minimizer for both the problems on the left hand side of \eqref{MinK2}, and hence is a minimum for \eqref{MinKink}.
\end{proof}

\begin{remark}
The stability of the ground states $(Q,0)$ could follow as a corollary of Proposition \ref{MinSW} without the need of any spectral argument, by the classical strategy from Cazenave and Lions \cite{CaLi82}. A configuration starting with energy close to the minimal energy of the ground state maintains this property for all times by energy conservation; and since all the minimizing sequences converge to $(Q,0)$ the evolution must remain close to the minimizer. Still, we consider the stability result in the next section to be more complete. In the first place, the stability notion in \ref{defst} is stronger. More importantly, the spectral study is preparatory to provide asymptotic stability results.
\end{remark}

\begin{remark}
In the case when $q<0$, there is another detail marking the difference between $-2<q<0$ and $q<-2$. If we linearize equation \eqref{sGdelta} near $u=0$,  we get the an evolution linear problem with the Schrödinger operator:
    \begin{equation*}
        (-\partial_{xx}+ 1 +q\delta_0) u =0.
    \end{equation*}
    For $q<0$, the operator has only one eigenfunction in the point spectrum, with eigenvalue 
    \begin{equation*}
        \lambda= 1- \left( \frac{q}{2} \right )^2.
    \end{equation*}
    
In the case when $-2<q<0$, the linearized problem around zero has a strictly positive eigenvalue at the bottom of its spectrum, and we expect the zero solution to be orbitally stable; instead for $q<-2$ the eigenvalue is negative, and the zero solution is spectrally unstable.
\end{remark}

\section{Stability Study}
\label{secstab}

In this section we prove Theorem \ref{TeoSt}. Since the spectral study is for the most part the same for either ground states or kinks, for simplicity in the notation we will call through this section $(K,0)$ as the generic static solution given by Proposition \ref{CarSW}, specifying where needed the differences if $K \in H^1$ or not, and the sign of $q$.

Evolution of small perturbation of stationary waves suggests to consider the following Cauchy problem 
\begin{equation}
  \begin{cases}
    \label{CauchyPert}
\partial_t v_1=v_2,\\
\partial_t v_2 = \partial_{xx}v_1 -\left (\sin(K+v_1) - \sin K\right )(1+q\delta_0(x)),
\\v_1|_{t=0}= v_1^0, \qquad v_2|_{t=0} = v_2^0.
\end{cases}
\end{equation}

For a perturbation which remains small over time, the dynamics is effectively described by the linearized evolution
\begin{equation}
  \begin{cases}
  \partial_t v_1=v_2,\\
  \partial_t v_2 = \partial_{xx}v_1 - (\cos K) v_1 -q (\cos K) v_1 \delta_0,
\label{LinPert}\\
v_1|_{t=0} = v_1^0,\qquad v|_{t=0}=v_2.
    \end{cases}
\end{equation}

This linear evolution system is governed by the linearized operator around the stationary wave, which formally reads as
\begin{equation}
    \label{lin} \tilde \cL_K:= - \partial_{xx} + \cos K + q (\cos K) \delta_0 : H^1 \to H^{-1}
\end{equation}

The spectral study of the last operator is complicated by the presence of the singular term $\delta_0$, which prevents from defining the operator in $H^2 \subset L^2 \to L^2$. 
To prove spectral properties we look at $\cL_K$, a proper restriction of $\tilde \cL_K$ with  $D(\cL_K) \subset H^2$ and image in $L^2$.

\begin{lemma}
    The operator $(\mathcal{L}_K, D(\cL_K))$ defined as 
      \begin{align}
           \label{domain}
\cL_K :&= -\partial_{xx} +\cos K, \\
   D(\mathcal{L}_K):&= \left \{ u \in H^2(\R\backslash \{0\}) \cap H^1 \, : \,\ u_x(0^+)- u_x(0^-) = q \cos(K(0)) u(0)\right \},
\notag
\end{align} is self-adjoint with respect to the standard $L^2(\R)$ product. Moreover, it coincides with $\tilde\cL_K$, i.e.
\begin{equation}
\label{eqoper}
\big( \tilde \cL_K u, v \big)_{H^{-1}; H^1}= \langle \mathcal L _K u, v \rangle_{L^2}, \qquad \forall\ u \in D(\cL_K), \; v\in H^1.
\end{equation}
\label{lemadj}
\end{lemma}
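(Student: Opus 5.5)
The plan is to realize $\cL_K$ as the operator associated with a closed, semibounded quadratic form on $H^1$, and then to identify its domain by integration by parts. Define on $H^1\times H^1$
\[
a(u,v):=\int_\R u_x v_x\,dx+\int_\R (\cos K)\, u v\,dx + q\cos(K(0))\,u(0)v(0),
\]
so that $a(u,v)=(\tilde\cL_K u,v)_{H^{-1};H^1}$ by the very definition \eqref{lin}. This form is symmetric. It is bounded on $H^1$, since $|\cos K|\le 1$ and $|u(0)|\le\|u\|_{L^\infty}$. It is also bounded below and closed. Indeed, inequality \eqref{Gagl} with $a$ large gives
\[
|q|\,|u(0)|^2\le \tfrac12\|u_x\|_{L^2}^2+C_q\|u\|_{L^2}^2,
\]
hence
\[
a(u,u)+(C_q+1)\|u\|_{L^2}^2\ge \tfrac12\|u\|_{H^1}^2 .
\]
The form norm is therefore equivalent to the $H^1$ norm, which makes the form closed on $H^1$. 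By the Kato (Friedrichs) representation theorem there is a unique self-adjoint operator $A$ on $L^2$ with
\[
D(A)=\{u\in H^1:\ \exists f\in L^2,\ a(u,v)=\langle f,v\rangle_{L^2}\ \forall v\in H^1\},\qquad Au=f .
\]
Relation \eqref{eqoper} then holds for $A$ by construction. It remains to prove $A=\cL_K$ with the domain \eqref{domain}.

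\emph{Inclusion $D(\cL_K)\subset D(A)$, and agreement of the two operators there.} Let $u\in H^2(\R\setminus\{0\})\cap H^1$ satisfy the jump condition, and let $v\in H^1$. Integrate by parts separately on $(-\infty,0)$ and $(0,\infty)$; the boundary terms at infinity vanish because $u_x,v\in H^1$ on each half-line. This gives
\[
\int_\R u_xv_x\,dx=-\int_{\R\setminus\{0\}}u_{xx}v\,dx-\big(u_x(0^+)-u_x(0^-)\big)v(0).
\]
The jump condition turns the last term into $-q\cos(K(0))u(0)v(0)$, which exactly cancels the $\delta$-term of $a$. Hence
\[
a(u,v)=\langle -u_{xx}+(\cos K)u,\,v\rangle_{L^2},
\]
with right-hand side in $L^2$. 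So $u\in D(A)$ and $Au=\cL_Ku$; in particular \eqref{eqoper} holds.

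\emph{Inclusion $D(A)\subset D(\cL_K)$.} This is the main step, and it mirrors the converse part of the proof of Lemma \ref{Lemma1}. Take $u\in D(A)$ with $Au=f$.
\begin{enumerate}
\item Test first with $v\in C_c^\infty(\R\setminus\{0\})$. This yields $-u_{xx}=f-(\cos K)u\in L^2$ in the distributional sense on each half-line, so $u\in H^2(\R\setminus\{0\})$. In particular the one-sided traces $u_x(0^\pm)$ exist.
\item For general $v\in H^1$, repeat the integration by parts of the previous inclusion. It gives
\[
0=a(u,v)-\langle f,v\rangle=\Big(q\cos(K(0))u(0)-\big(u_x(0^+)-u_x(0^-)\big)\Big)v(0).
\]
\item Choose $v$ with $v(0)=1$ to obtain the jump condition, so $u\in D(\cL_K)$.
\end{enumerate}

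Thus $A=\cL_K$, and self-adjointness follows from the representation theorem. The only delicate points are the form-boundedness of the point evaluation and the justification of the one-sided traces; the trace claim holds because $H^2$ on a half-line embeds into $C^1$ up to the boundary.
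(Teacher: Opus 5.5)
Your argument is correct, but it takes a different route from the paper.

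\textbf{The paper's route.} The paper uses von Neumann extension theory. It takes the symmetric operator $\mathcal M$, namely $-\partial_{xx}+\cos K$ on $\{u\in H^2(\R\setminus\{0\})\cap H^1:\ u(0)=0,\ u_x(0^+)=u_x(0^-)\}$. It identifies $\mathcal M^*$ as the same expression on $H^2(\R\setminus\{0\})\cap H^1$. It then quotes from Albeverio \emph{et al.} that the self-adjoint extensions are exactly the operators $\mathcal L_Z$ with jump condition $u_x(0^+)-u_x(0^-)=Z\,u(0)$, $Z\in(-\infty,\infty]$. The operator $\cL_K$ is the choice $Z=q\cos K(0)$, and \eqref{eqoper} is checked afterwards by integration by parts.

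\textbf{Your route and what it buys.} You build $\cL_K$ directly as the operator of the closed semibounded form $a$ via Kato's first representation theorem; $a$ is exactly the form $Q_K$ that the paper introduces later. You then recover the domain through the two inclusions. This is self-contained: it needs neither the identification of $\mathcal M^*$ (which the paper only sketches) nor the external classification. It also gives at once the semiboundedness of $\cL_K$ and its identification with $Q_K$. That is the content of Lemma \ref{Q=L}, for which the paper separately appeals to Kato.

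\textbf{What the paper's route buys.} It exhibits $\cL_K$ as a self-adjoint extension of a nonnegative symmetric operator with deficiency indices $(1,1)$. The paper reuses exactly this structure to bound the Morse index by one (Lemma \ref{lemma1n}, Corollary \ref{Corq}). It also uses it to see that the essential spectrum is independent of $Z$ (Lemma \ref{lemspec}). In your framework these facts would instead come from min-max on the codimension-one subspace $\{u(0)=0\}$, where the form is nonnegative, and from a rank-one resolvent comparison with $-\partial_{xx}+\cos K$ on $H^2$.

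\textbf{A cosmetic point.} Your estimate actually gives $a(u,u)+(C_q+1)\|u\|_{L^2}^2\ge\tfrac12\|u_x\|_{L^2}^2$. To have $\tfrac12\|u\|_{H^1}^2$ on the right, replace $C_q+1$ by $C_q+\tfrac32$; nothing else changes.
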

\begin{proof}
    Consider the following densely defined operator in $L^2$,
    \begin{equation} \label{defcalM}\begin{aligned}
    \mathcal{M} &:= -\partial_{xx} + \cos K, \\
    D(\mathcal{M}) &:= \left \{ u \in H^2(\R\setminus \left\{0 \right \}) \cap H^1 \, : \, u_x(0^+)- u_x(0^-) = 0 = u(0)\right \}.
    \end{aligned}
    \end{equation}
$\mathcal{M}$ is bounded from below, densely defined and closed by direct verification. Now for any $u,v\in H^2(\R\setminus\left \{ 0\right\})$ we have
\begin{equation}
    \left \langle \mathcal{M} u,v\right \rangle_{L^2}= -v_x(0^-)u(0^-) + v(0^-)u_x(0^-) +v_x(0^+)u(0^+) - v(0^+)u_x(0^+) + \left \langle u,\mathcal{M} v\right \rangle_{L^2},\label{adjoin}
\end{equation} which implies that $\mathcal{M}$ is symmetric over its domain. The formal adjoint operator is given by
 \begin{equation}
 \label{adj}
 \begin{aligned}
     \mathcal{M^*} &= -\partial_{xx} + \cos K, \\
    D(\mathcal{M^*}) &= H^2(\R\setminus \left\{0 \right \}) \cap H^1 (\R).
    \end{aligned}
\end{equation}
    
    In fact for any $u\in D(\mathcal{M})$ and $v\in D(\mathcal{M^*})$ we have, by \eqref{adjoin}, that
    \begin{equation*}
          \left \langle \mathcal{M} u,v\right \rangle_{L^2}=  \left \langle  u,\mathcal{M}v\right \rangle_{L^2}.
    \end{equation*}
To complete the domain description, suppose there exist $v, z\in L^2$ such that for any $u\in D(\cM)$ there holds  $\langle \cM u, v\rangle_{L^2} = \langle u,z\rangle_{L^2}$. Let $v_n\in D(\cM ^*)$ converge to $v$ in $L^2$. Then
\[ 
\langle u,z\rangle_{L^2} = \lim_n \langle \cM u, v_n\rangle_{L^2} = \lim_n \langle u,\cM v_n\rangle_{L^2}.
\]
The above convergence for all $u$ in a dense domain implies $\cM v_n \to z$ in $L^2$. It follows $v\in D(\cM^*)$ since $(\cM, D(\cM^*)$ is closed.
 
Hence (see Albeverio \emph{et al.} \cite{AGHH2ed}) all the self-adjoints extensions of $\mathcal{M}$ are given by the one parameter family 
      \[
      \begin{aligned}
      \mathcal L_Z &= -\partial_{xx} + \cos K, \\ 
    D(\mathcal L_Z) &= \left \{ u \in H^2(\R \backslash \{0\}) \cap H^1 \, : \, u_x(0^+)- u_x(0^-) = Z u(0)\right \},
    \end{aligned}
    \]
for $-\infty < Z\le \infty$. Taking $Z=q\cos K(0)$ we have the equality \eqref{eqoper} by direct computation.
\end{proof}

It is to be observed that system \eqref{LinPert} has the following vectorial representation,
\begin{equation}
    \label{PertLinMat}
    \begin{pmatrix}
        v_1\\ v_2 
    \end{pmatrix}_t =  JE_K \begin{pmatrix}
        v_1 \\ v_2
    \end{pmatrix},
\end{equation}
for $J, E_K$ matrices/operators defined as
\[
J:= \begin{pmatrix} 0 & 1 \\ -1 & 0 \end{pmatrix}, \qquad E_K := \begin{pmatrix} \mathcal{L}_K & 0 \\ 0 & 1 \end{pmatrix}, \quad JE_K : D(\cL_K) \times L^2 \to L^2\times L^2.
\] 

Let us recall the definition of spectral stability as well as a criterion for spectral instability.

\begin{definition}
Let $(K,0)$ be a stationary vector solution of \eqref{CauchyDeltaV} (i.e. $K$ is a solution of \eqref{staticDelta}).
We say that $(K,0)$ is \emph{spectrally stable} if the spectrum of the linearized operator
$JE_K$ satisfies $\sigma(JE_K) \subset i\mathbb{R} $.
Otherwise, $(K,0)$ is said to be \emph{spectrally unstable}.
\end{definition}

We recall the following spectral instability criterion from \cite{AnPl21a} (Theorem 3.2).

\begin{theorem}[instability criterion]
\label{InstabilityCrit}
Suppose the following assumptions hold:
\begin{enumerate}
   \item   $(\cL_K, D(\cL_K))$ is densely defined and self-adjoint in $L^2$.
    \item  $\mathcal{L}_K : D(\mathcal{L}_K) \to L^2(\R)$ is invertible with Morse index $n(\mathcal{L}_K) = 1$, and its spectrum satisfies
    \begin{equation} \label{SpecCond}
        \sigma(\mathcal L_K) = \{\lambda_0\} \cup J_0, \quad J_0 \subset [r_0, \infty), \quad r_0 > 0, \ \lambda_0 < 0.
    \end{equation}
    \item $JE_K$ is the generator of a $C_0$ semigroup $\{S(t)\}_{t \ge 0}$.
\end{enumerate}
Then the operator $JE_K$ has a real positive and a real negative eigenvalue.
\end{theorem}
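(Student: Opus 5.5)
The plan is to exploit the block structure of $JE_K$ and reduce the eigenvalue problem for $JE_K$ to the one for $\cL_K$. Writing out $JE_K (v_1,v_2)^\top = \lambda (v_1,v_2)^\top$ componentwise gives
\[
v_2 = \lambda v_1, \qquad -\cL_K v_1 = \lambda v_2 ,
\]
so that $\cL_K v_1 = -\lambda^2 v_1$. Hence a real eigenvalue $\lambda\neq 0$ of $JE_K$ corresponds exactly to a negative eigenvalue $-\lambda^2$ of $\cL_K$. I would therefore argue in the converse direction: take the negative spectral point $\lambda_0<0$ from assumption (2), show it is a genuine eigenvalue with an eigenfunction $\phi_0\in D(\cL_K)$, and build the eigenvectors of $JE_K$ explicitly.

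\textbf{Step 1: $\lambda_0$ is an eigenvalue.} By assumption (1), $\cL_K$ is self-adjoint, and by \eqref{SpecCond} the point $\lambda_0<0$ is isolated in $\sigma(\cL_K)$, since the rest of the spectrum lies in $[r_0,\infty)$ with $r_0>0$. For a self-adjoint operator, the spectral projection $P=\chi_{\{\lambda_0\}}(\cL_K)$ onto an isolated spectral point is nonzero, and its range consists of eigenvectors. So there is $\phi_0\in D(\cL_K)\setminus\{0\}$ with $\cL_K\phi_0=\lambda_0\phi_0$. The hypothesis $n(\cL_K)=1$ says this eigenspace is one-dimensional, though simplicity is not needed for existence. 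I expect this to be the only step needing care, and it is settled by the spectral theorem.

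\textbf{Step 2: explicit eigenvectors of $JE_K$.} Set $\mu:=\sqrt{-\lambda_0}>0$ and $\Phi_\pm := (\phi_0, \pm\mu\phi_0)^\top$. Then $\Phi_\pm\in D(\cL_K)\times L^2 = D(JE_K)$. Moreover
\[
JE_K\Phi_\pm = (\pm\mu\phi_0,\ -\cL_K\phi_0)^\top = (\pm\mu\phi_0,\ \mu^2\phi_0)^\top = \pm\mu\,\Phi_\pm .
\]
So $\pm\mu$ are real eigenvalues of $JE_K$, one positive and one negative, which is the claim.

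\textbf{Role of the remaining hypotheses.} Invertibility of $\cL_K$ and the generation of a $C_0$ semigroup in assumption (3) are not needed for mere existence of the eigenvalues in this diagonal setting. They matter in the general framework of \cite{AnPl21a}, where $E_K$ is not block diagonal. They also matter downstream, for passing from the positive eigenvalue $\mu$ to nonlinear instability: via the semigroup $S(t)$ one gets exponential growth $e^{\mu t}\Phi_+$, and invertibility excludes a zero mode. I would mention this but keep the proof of the criterion itself to Steps 1 and 2.
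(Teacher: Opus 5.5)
Your argument is correct. Note that the paper does not prove this statement: it imports it from \cite{AnPl21a} (Theorem 3.2) and then applies it in Proposition \ref{PropSt}. So your proof is a genuinely different, self-contained route rather than a reconstruction of the paper's.

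\textbf{How your route works.} You exploit the block form $JE_K=\begin{pmatrix}0&1\\ -\cL_K&0\end{pmatrix}$, which makes the eigenvalue problem decouple into $v_2=\lambda v_1$ and $\cL_K v_1=-\lambda^2 v_1$. The only input you need is one negative eigenvalue $\lambda_0$ of $\cL_K$ with an eigenfunction $\phi_0\in D(\cL_K)$. You get it from the isolatedness of $\lambda_0$ and the spectral theorem; it also follows directly from $n(\cL_K)=1$. The vectors $(\phi_0,\pm\sqrt{-\lambda_0}\,\phi_0)$ are then explicit eigenvectors with eigenvalues $\pm\sqrt{-\lambda_0}$.

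Since $\phi_0\in D(\cL_K)\subset H^1$, these vectors also lie in $D(\cA)=D(\cL_K)\times H^1$, the domain of the generator on $H^1\times L^2$ from Lemma \ref{C0Lemma}. That is the setting where the eigenvector is actually used in the proof of Theorem \ref{TeoSt}, via $L_T\Phi_+=e^{\sqrt{-\lambda_0}\,T}\Phi_+$.

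\textbf{What each approach buys.} Your approach buys transparency. It shows that invertibility of $\cL_K$, the gap $J_0\subset[r_0,\infty)$ and hypothesis (3) are superfluous for the stated conclusion. Their real role is downstream, in the passage to nonlinear instability through the semigroup, the operator $L_T$ and the Duhamel estimate. Citing the criterion buys brevity, and a statement in the form such linear-instability criteria take in the literature, where hypotheses of this type are needed when the eigenvalue problem for $JE$ does not decouple.
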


\begin{lemma}
\label{lemma1n}
   Let $(K,0)$ be a stationary wave. Then $\mathcal{L}_K$ has at most one negative eigenvalue.
\end{lemma}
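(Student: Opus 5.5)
The plan is to compare $\mathcal{L}_K$ with the unperturbed sine-Gordon linearization and use a rank-one perturbation (min-max) argument. Let $\mathcal{L}^0_K := -\partial_{xx}+\cos K$ with domain $H^2(\R)$; this is the operator $\mathcal L_Z$ of Lemma \ref{lemadj} with $Z=0$. Both $\mathcal{L}_K$ and $\mathcal{L}^0_K$ are self-adjoint extensions of the same symmetric operator $\mathcal{M}$ defined in \eqref{defcalM}. The functions in $D(\mathcal{M})$ satisfy the one extra constraint $u(0)=0$ beyond $D(\mathcal L_Z)$, so $\mathcal{M}$ has deficiency indices $(1,1)$. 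By Krein's resolvent formula, or equivalently by comparing quadratic forms, the resolvents of $\mathcal{L}_K$ and $\mathcal{L}^0_K$ then differ by a rank-one operator. Concretely, the form of $\mathcal{L}_K$ is
\[
\mathfrak{q}_K(u)=\int_\R u_x^2+\cos K\,u^2\,dx+q\cos(K(0))\,u(0)^2,\qquad u\in H^1,
\]
as in \eqref{eqoper}. On the codimension-one subspace $\{u\in H^1: u(0)=0\}$ it coincides with the form of $\mathcal{L}^0_K$.

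The key step is to count the negative directions of the unperturbed form on that constrained subspace. First I argue that $\mathcal{L}^0_K$ has no negative spectrum, treating the two cases of Proposition \ref{CarSW} separately.

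\emph{Kink case.} Here $K=4\arctan e^x$ is the classical kink. Its linearization $-\partial_{xx}+1-2\,\mathrm{sech}^2x$ is a Pöschl–Teller operator whose only eigenvalue is $0$, with eigenfunction $K'$, so $\mathcal{L}^0_K\ge 0$.

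\emph{Ground state case.} Here $K=Q$ from \eqref{formulaGS}. On each half-line $Q$ is a translated kink $K_{x_0}(\mp x)$, so $\cos Q(x)=1-2\,\mathrm{sech}^2(|x|-x_0)$ for $x\gtrless 0$. This potential is not a single Pöschl–Teller well. The cleanest route is to avoid $\mathcal{L}^0_K$ entirely and work directly on $H^1_0(\R\setminus\{0\})=\{u(0)=0\}$, where the form of $\mathcal{L}_K$ splits as the direct sum of two half-line Dirichlet problems. For $x>0$ the operator is the restriction of the translated Pöschl–Teller operator $-\partial_{xx}+1-2\,\mathrm{sech}^2(x-x_0)$, which is nonnegative on all of $\R$. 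Its restriction to $H^1_0(0,\infty)$ therefore remains nonnegative, since the form is the same and the domain is smaller. The same holds for $x<0$. This argument also covers the kink case, so I will use it uniformly.

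Putting these together: $\mathfrak{q}_K\ge 0$ on a closed subspace of codimension one in the form domain $H^1$. By the min-max principle, if $\mathcal{L}_K$ had two negative eigenvalues (counted with multiplicity), their span would be a two-dimensional subspace on which $\mathfrak{q}_K<0$. Such a subspace would intersect $\{u(0)=0\}$ nontrivially, which is a contradiction. Hence $\mathcal{L}_K$ has at most one negative eigenvalue.

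The main technical point I expect to need care is the identification of the form domain of $\mathcal{L}_K$ with $H^1$, and of the form itself with $\mathfrak q_K$ including the boundary term. This should follow by integrating by parts on each half-line using the jump condition in \eqref{domain}, exactly as in \eqref{eqoper}. A second point is justifying that the half-line restriction of the nonnegative Pöschl–Teller form is nonnegative. That is immediate once the form is written as $\int(\partial_x u-\tanh(x-x_0)u)^2dx\ge0$, which is the factorization through the zero mode $\mathrm{sech}(x-x_0)$. Since $\tanh$ is bounded, this identity holds for all $u\in H^1_0(0,\infty)$ after an integration by parts with no boundary term, because $u(0)=0$.
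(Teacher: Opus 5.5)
Your argument is correct and is essentially the paper's. You show the form is nonnegative on the codimension-one constraint $u(0)=0$ by factoring through the zero mode of the sine-Gordon linearization; the paper does the same with its Hardy trick, $v=|K_x|$, $g=u/v$ on $D(\mathcal M)$. You then carry out the codimension-one count directly by min-max on $\mathfrak q_K$, where the paper cites Proposition A.3 of \cite{AnPl21a} for the symmetric operator $\mathcal M$ with deficiency indices $(1,1)$.

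There is one slip: the factorization should read $\int(\partial_x u+\tanh(x-x_0)u)^2\,dx$, because $\partial_x+\tanh(x-x_0)$ is what annihilates $\mathrm{sech}(x-x_0)$. With your minus sign the expansion gives $\int u_x^2+u^2\,dx$ instead. This does not affect the proof, since your extension-by-zero argument from the nonnegative full-line P\"oschl--Teller form already suffices.
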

\begin{proof}
By Lemma \ref{lemadj}, $\mathcal{L}_K$ is a self-adjoint extension of the symmetric operator $\mathcal{M}$ with deficiency indices $n_\pm (\cM)=1$. We can prove  $\mathcal{M} \ge 0$. Recall the well known property for $\tilde K$ a sine-Gordon kink
\begin{equation}
\label{tildeK} -\partial_{xx} \tilde K_{x} + \cos(\tilde K) \tilde K_x= 0; \ \ \ \tilde K_x >0 
\end{equation}
By a standard Hardy trick (see e.g. \cite{IgNSZ15}, Appendix A) this implies the operator is positive, and $\tilde{K_x}$ is the unique $0$ eigenfunction. We adapt the calculation of \cite{IgNSZ15} to the present case: for $u\in D(\cM)$ with compact support, define $v:= |K_x|>0$ and letb $g:= u/v$. Then we have
\begin{align} \notag
\langle \cM u, u \rangle &= -\int_\R g^2 v v_{xx}+ 2gvg_xv_x + v^2 g g_{xx}  \, dx + \int_\R \cos K v^2 g^2 \, dx
\\ \notag &= -\int_\R g^2 v \left ( -\partial_{xx} + \cos K \right ) v \, dx + \int_\R g_x^2 v^2 \, dx + \left [ g v^2 g_x\right ]_{0^-}^{0+}
\\ \label{contoHardy}		& = \int_\R g_x v^2 \, dx + g(0)v^2(0) \left (g_x(0^-) - g_x(0^+)\right).
\end{align}

We have used identity \eqref{tildeK} for $v$, and standard integration by parts outside $0$. If $u\in D(\cM)$, $u(0)=0$ and this implies $g(0)=0$. Moreover
\begin{equation}
\label{BoundHardy} 0=u_x(0^+)- u_x(0^-) = v(0) \left ( g_x(0^+) - g_x(0^-) \right ).
\end{equation}
and hence the boundary terms in \eqref{contoHardy} dissapears. The thesis follows by Proposition A.3 in \cite{AnPl21a}.
\end{proof}

\begin{lemma}
\label{lemspec}
    Let $(K,0)$ be a stationary wave. It holds 
\[
\ker \cL_K = \left\{ 0\right \}, \quad\sigma_{\mathrm{ess}} (\cL_K)=[1,\infty).
\]
\end{lemma}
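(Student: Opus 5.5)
Two separate claims need proof: the kernel of $\cL_K$ is trivial, and the essential spectrum is $[1,\infty)$. For the essential spectrum I will use a Weyl-type argument. The stationary wave $K$ is either the kink $4\arctan e^x$ or the ground state $Q$, and in both cases it converges exponentially to a multiple of $2\pi$ as $|x|\to\infty$. Hence $\cos K - 1$ is bounded and decays exponentially.

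Next I compare $\cL_K$ with the free operator $\cL_0 := -\partial_{xx}+1$, taken on the same domain with the jump condition $u_x(0^+)-u_x(0^-) = Z u(0)$, $Z=q\cos K(0)$. This free operator is the standard $\delta$-interaction Hamiltonian of Albeverio \emph{et al.} \cite{AGHH2ed}, shifted by $1$, so its essential spectrum is $[1,\infty)$. The difference $\cL_K-\cL_0$ is multiplication by $\cos K-1$, a bounded function vanishing at infinity, and is therefore relatively compact with respect to $\cL_0$. Weyl's theorem then gives $\sigma_{\mathrm{ess}}(\cL_K)=[1,\infty)$.

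An alternative route to the same result is the resolvent formula. Resolvents of different self-adjoint extensions of $\cM$ differ by a rank-one operator, because the deficiency indices are $(1,1)$. So $\sigma_{\mathrm{ess}}(\cL_K)=\sigma_{\mathrm{ess}}(-\partial_{xx}+\cos K)$, and the latter equals $[1,\infty)$, as is classical for the sine-Gordon linearization.

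For the kernel, suppose $\cL_K u=0$ with $u\in D(\cL_K)$. On each half-line $u$ solves $-u_{xx}+\cos K\, u=0$, a second-order ODE. On each half-line $K$ coincides with a translate of the sine-Gordon kink (or its reflection), so $K_x$ is a solution there. Since $\cos K\to 1$, the other independent solution grows like $e^{|x|}$. Requiring $u\in L^2$ then forces $u=a K_x$ on $(-\infty,0)$ and $u=bK_x$ on $(0,\infty)$, with $K_x$ meaning the one-sided derivative.

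I then work through the two types of stationary wave separately.

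\emph{Kink case.} Here $K$ is smooth, so continuity at $0$ gives $a=b$ and $u_x(0^+)-u_x(0^-)=a(K_{xx}(0^+)-K_{xx}(0^-))=0$. Since $K(0)=\pi$, the jump condition reads $0=q\cos(\pi)u(0)=-q\,a K_x(0)$. Because $K_x(0)=2\neq 0$ and $q\neq0$, this forces $a=0$.

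\emph{Ground state case.} The wave $Q$ is even, so $Q_x$ is odd and $Q_x(0^\pm)=\mp c$ with $c\ne0$. Continuity of $u$ at $0$ gives $-a c = b c$, hence $b=-a$, which makes $u = a|Q_x|$ up to sign, an even function. Its derivative jump is $u_x(0^+)-u_x(0^-)=2a\,(\text{one-sided } Q_{xx})$. To evaluate this I differentiate the ODE relations at $0^\pm$: from $\tfrac12 Q_x^2 = 1-\cos Q$ we get $Q_{xx}=\sin Q$ on each side. The jump condition therefore becomes
\[
2a\sin Q(0)\,\mathrm{sgn}\text{-factor} = q\cos Q(0)\cdot a\,|Q_x(0)|.
\]
I will combine this with the gluing identity \eqref{compat}, which in terms of $y=\sqrt{(q+2)/(q-2)}$ says $|Q_x(0)|\cdot 2 = -q\sin Q(0)$ up to sign, together with the explicit values of $\sin Q(0)$ and $\cos Q(0)$ in terms of $y$. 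The aim is to show that the resulting relation reads $a\,(\text{nonzero quantity})=0$. Based on \eqref{y2}, I expect the nonzero quantity to reduce to something like $4-q^2\neq0$ whenever $|q|>2$.

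I expect the ground state case to be the main obstacle. It requires careful sign bookkeeping of the one-sided derivatives and the trigonometric simplification that rules out the degenerate value of $q$. If a direct computation gets messy, I will parametrize by $x_0$, write $u=a\,\partial_{x_0}$ of the glued profile's branches, and observe that a kernel element corresponds to differentiating the gluing equation \eqref{compcon} with respect to $x_0$. Nondegeneracy then amounts to the derivative in $y$ of $\frac{1-y^2}{1+y^2}$ being nonzero at the solution, which holds since $y>0$.
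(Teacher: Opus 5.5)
Your proposal is correct and follows essentially the same route as the paper. The paper obtains $\sigma_{\mathrm{ess}}(\cL_K)=[1,\infty)$ by what you call your alternative argument: all self-adjoint extensions of $\cM$ share the essential spectrum, and Weyl's theorem is then applied to the $H^2$ extension $-\partial_{xx}+\cos K$. It proves $\ker\cL_K=\{0\}$ by the same reduction to $u=a^\pm K_x$ on the half-lines, followed by continuity and the jump condition in the kink and ground-state cases. Your expectation for the ground state holds: with $c=|Q_x(0^\pm)|$, $q\sin Q(0)=-2c$ and $\cos Q(0)=8/q^2-1$, the jump condition reduces to $a\,c\,(q^2-4)/q=0$. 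Your fallback via differentiating the gluing equation in $x_0$ is also valid, because $\frac{d}{dy}\frac{1-y^2}{1+y^2}=-\frac{4y}{(1+y^2)^2}\neq0$.
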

\begin{proof}
  All the self adjoint extensions $(\mathcal{L}_Z, D(\mathcal{L}_Z))$ for $-\infty <Z\le \infty$ have the same essential spectrum (see Proposition A.5 in \cite{AnPl21a}). For $Z=0$, the operator coincides with $\mathcal{L}_0=-\partial_{xx} + \cos K$ over the domain $D(\mathcal{L}_0)=H^2$. By Weyl's Theorem for compact perturbations 
\[\sigma_{\mathrm{ess}} (\mathcal{L}_0) = [1,\infty).\]

Suppose there exists a nonzero $v\in D(\mathcal{L}_K)$ with $\mathcal{L}_K v=0$. By \eqref{tildeK}, 
Sturm-Liouville theory for ODE's on half line implies $v=a^- K_x$ for $x<0$, $v=a^+ K_x$ for $x>0$, $a^-, a^+$ constants. If $K$ is a kink as in Proposition \ref{CarSW}, continuity in $0$ implies $a^+=a^-$; but then $v$ is smooth and it cannot verify the domain condition in \eqref{domain}. Similarly, if $K$ is a ground state, $a^+=-a^-$ by continuity and the domain condition in \eqref{domain} would imply the equality 
    \begin{equation*}
        0= 2K_{xx} (0^-) + q K_x(0) \cos(K(0)) = 2\sin K(0) + q K_x (0)\cos(K(0)).
    \end{equation*}
    But then by the expression of $K$ for $|q|>2,$ $y$ given by \eqref{y2},
    \begin{align}\notag
     2 \sin K(0) + q K_x(0) \cos(K(0)) &=  2  \sin (4 \arctan(y)) + 4q \cos(\arctan(y)) \frac y {1+y^2}\\&= - \frac{2(q-2)\sqrt{q^2-4}}{q^2} 
      \label{contoq}
    \end{align}
   and the domain condition is not verified.
\end{proof}

\begin{lemma}
\label{C0Lemma}
The operator $(\cA:= JE_K, D(\cA) := D(\cL_K )\times H^1)$ generates a $C_0$ semigroup $e^{\cA t}$ over $H^1\times L^2$. The Cauchy problem \eqref{PertLinMat} has a global solution, 
for initial value $(v_1^0, v_2^0)\in H^1 \times L^2 $ given by 
\[ \begin{pmatrix} v_1(t)\\ v_2(t) \end{pmatrix} = e^{\cA t}\begin{pmatrix} v_1^0\\ v_2^0 \end{pmatrix}.\]
If  $(v_1^0, v_2^0)\in D(\cA)$, then the solution belongs to   $ C( [0,\infty); D (\cA)) \cap C^1( [0,\infty); H^1\times L^2)$
Moreover it satisfies for some $C, \beta>0$ independent on $(v_1^0,v_2^0)$, and all $t>0$
\begin{equation}
\label{bounC0}
\|(v_1(t),v_2(t))\|_{H^1\times L^2} \le Ce^{\beta t} \|(v_1^0,v_2^0)\|_{H^1\times L^2}.
\end{equation}
\end{lemma}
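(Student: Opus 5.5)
The plan is to reduce the generation result to a standard statement about wave-type operators built from a self-adjoint operator bounded from below, and then absorb a bounded perturbation. By Lemma \ref{lemadj}, $\cL_K$ is self-adjoint on $L^2$. By Lemmas \ref{lemma1n} and \ref{lemspec}, its spectrum consists of at most one negative eigenvalue together with $[1,\infty)$, so it is bounded from below. We therefore pick $\mu>0$ large enough that $\cL_K+\mu\ge 1$.

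The main step is to identify the form domain of $\cL_K+\mu$ with $H^1$, with an equivalent norm. The quadratic form associated with $\cL_K$ is
\[
\mathfrak{q}(u)=\int_\R u_x^2+\cos K\, u^2\,dx + q\cos K(0)\, u(0)^2 ,
\]
which is consistent with \eqref{eqoper}. The point term is controlled by the inequality \eqref{Gagl}: for every $\eta>0$,
\[
|u(0)|^2\le \eta \|u_x\|_{L^2}^2 + C_\eta\|u\|_{L^2}^2 .
\]
Taking $\eta$ small gives, for $\mu$ large,
\[
c\|u\|_{H^1}^2\le \mathfrak{q}(u)+\mu\|u\|_{L^2}^2\le C\|u\|_{H^1}^2 .
\]
By Kato's representation theorem, $D\big((\cL_K+\mu)^{1/2}\big)=H^1$ with equivalent norms. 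This identification of the form domain is the one step that is not routine, because $D(\cL_K)$ depends on the jump condition at $0$. Once it is in hand, everything else follows from general theory.

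Next, write
\[
\cA=\cA_\mu+\caB,\qquad \cA_\mu=\begin{pmatrix}0&1\\-(\cL_K+\mu)&0\end{pmatrix},\qquad \caB=\begin{pmatrix}0&0\\ \mu&0\end{pmatrix}.
\]
On $H^1\times L^2$ we use the energy inner product
\[
\langle (u_1,u_2),(w_1,w_2)\rangle_\mu=\mathfrak{q}_\mu(u_1,w_1)+\langle u_2,w_2\rangle_{L^2},
\]
where $\mathfrak{q}_\mu(u_1,w_1)=\mathfrak{q}(u_1,w_1)+\mu\langle u_1,w_1\rangle_{L^2}$ is the bilinear form of $\cL_K+\mu$. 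By the previous step this inner product is equivalent to the standard one. The operator $\cA_\mu$, with domain $D(\cL_K)\times H^1$, is skew-adjoint in this inner product. Skewness follows from \eqref{eqoper}:
\[
\langle\cA_\mu U,U\rangle_\mu=\mathfrak{q}_\mu(u_1,u_2)-\langle(\cL_K+\mu)u_1,u_2\rangle_{L^2}=0 .
\]
For maximality, solve $(\lambda-\cA_\mu)U=F$ for $\lambda\in\R\setminus\{0\}$. This reduces to
\[
(\lambda^2+\cL_K+\mu)u_1=f_2+\lambda f_1 ,
\]
which has a unique solution $u_1\in D(\cL_K)$ because $\cL_K+\mu\ge1$. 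Setting $u_2=\lambda u_1-f_1\in H^1$ completes the construction. By Stone's theorem, or equivalently Lumer--Phillips applied to $\pm\cA_\mu$, the operator $\cA_\mu$ generates a $C_0$ group of isometries of the $\mu$-energy norm.

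The perturbation $\caB$ is bounded on $H^1\times L^2$, with $\|\caB\|\le \mu$ up to the norm-equivalence constant. The bounded perturbation theorem (Pazy, Thm.~3.1.1) then shows that $\cA$ generates a $C_0$ semigroup with
\[
\|e^{\cA t}\|\le M e^{\|\caB\| t} .
\]
Returning to the standard norm costs only a multiplicative constant, which gives \eqref{bounC0} with $\beta=\mu$ times an equivalence constant. Finally, the standard semigroup theory gives the remaining conclusions. For data in $H^1\times L^2$, the mild solution is $e^{\cA t}(v_1^0,v_2^0)$. For data in $D(\cA)$, the solution lies in $C([0,\infty);D(\cA))\cap C^1([0,\infty);H^1\times L^2)$.
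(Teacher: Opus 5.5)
Your proof is correct. It differs from the paper's mainly in how the non-conservative part of $\cA$ is handled.

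\emph{The paper's route.} It uses the inner product $\langle u,v\rangle_{X_\beta}=\int u_xv_x+\beta uv\,dx+q\cos K(0)u(0)v(0)$, which contains the point term but not the potential $\cos K$. The resulting non-skew remainder $\int(\cos K-\beta)v_1v_2\,dx$ is absorbed by a shift: the paper shows that $\cA-(\beta+1)I$ is dissipative. It then takes the range condition from $\lambda\in\sigma(\cA)\iff-\lambda^2\in\sigma(\cL_K)$, applies Lumer--Phillips once, and rescales by $e^{(\beta+1)t}$.

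\emph{Your route.} You build the whole form $\mathfrak q_\mu$ (potential and point term) into the energy inner product. The principal part $\cA_\mu$ is then exactly skew-adjoint and generates an isometric group. You recover $\cA$ by treating $(u_1,u_2)\mapsto(0,\mu u_1)$ as a bounded perturbation.

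\emph{What each buys.} The paper needs no perturbation theorem and reads the growth rate directly off the shift. Your version has several advantages:
\begin{itemize}
\item the conservative structure is exact, so you get a $C_0$ group, i.e.\ both time directions, which matches the time reversibility used elsewhere in the paper;
\item you check the range condition explicitly through $(\lambda^2+\cL_K+\mu)u_1=f_2+\lambda f_1$, using only semiboundedness of $\cL_K$;
\item you prove, via \eqref{Gagl}, the equivalence with the $H^1$ norm that the paper only asserts for $X_\beta$.
\end{itemize}

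Two minor remarks. First, you do not need Lemmas \ref{lemma1n} and \ref{lemspec}: your two-sided estimate together with \eqref{eqoper} already gives $\cL_K+\mu\ge 1$ on $D(\cL_K)$. Also, the spectral description you quote from those lemmas omits possible discrete eigenvalues in $(0,1)$; this is harmless, since only semiboundedness is used. Second, Kato's representation theorem is not really the crux. Skewness and the resolvent computation use only \eqref{eqoper} and the norm equivalence of $\mathfrak q_\mu$ on $H^1$, and that norm equivalence is where the real work lies.
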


\begin{proof}
Consider $\beta>0$ such that the biliear form 
\begin{equation*}
\langle u, v\rangle_{X_\beta} := \int_\R v'u' + \beta uv + q \cos K(0) v(0) u(0); \ \ \ u,v\in H^1
\end{equation*}
induces a norm equivalent to $\|\cdot \|_{H^1}$. Let the operator $\caB := \cA - (\beta+1) I$ be defined over $D(\cA)$. We prove explicitly $\caB$ is dissipative: for $\mathbf{v}= (v_1,v_2) \in D(\cA)$ we have
\[
\begin{aligned}
\langle -\caB \mathbf{v}, \mathbf{v} \rangle_{X_\beta \times L^2} =& \langle -v_2, v_1\rangle_{X_\beta} + \langle \mathcal{L}_K v_1, v_2\rangle_{L^2} + \beta \left ( \|v_1\|_{H^1}^2 + \|v_2\|_{L^2}^2\right) 
\\=& -\beta \langle v_2, v_1\rangle_{L^2} + \langle \cos K v_1, v_2 \rangle_{L^2}+(\beta+1) \left ( \|v_1\|_{H^1}^2 + \|v_2\|_{L^2}^2\right) 
\\=& (\beta+ 1) (\|v_1\|_{H^1}^2 + \|v_2\|_{L^2}^2) - \langle (-\beta + \cos K )v_2, v_1\rangle_{L^2} \ge 0
\end{aligned}
\]
Recall that a complex $\lambda$ belongs to $\sigma( \cA, D(\cA))$ if and only if $-\lambda^2 \in \sigma (\mathcal{L}_K, D ( \cL _K ) $. By specrtal study of $\cL_K$ and definition of $\caB$ we can pick $\mu>0$ 
such that $ \caB - \mu I$ is surjective. By Lumer-Phillips theorem $\caB$ is the generator of a $C_0$ semigroup of contractions $\left \{ W(t) \right \}$ 
over $H^1\times L^2$, satysfing $\|W(t)\|_{H^1\times L^2}\le 1.$ 
Finally, define the $C_0$ semigroup $e^{\cA t}:= e^{\beta+1}t W(t)$ generated by $\cA$. Verification of \eqref{bounC0} is direct.
\end{proof}

\begin{lemma}
Let $(K,0)$ be a stationary wave. Consider the bilinear form
\begin{equation*}
    Q_K(v, w):= \int v_x w_x + \cos K v w \,dx + q \cos(K(0)) v(0)w(0).
\end{equation*}
Then $Q$ is symmetric, closed and bounded below over $H^1 \times H^1$ and satisfies for any $(v_1, v_2) \in H^1\times L^2$
\begin{equation*}
    E(v_1+ K, v_2)= E(K,0) +\frac{1}2 Q_K(v_1,v_1) + \frac{1}{2} \left \| v_2\right \|_{L^2}^2+ o(\| v_1\|_{H^1}^2).
\end{equation*}
\end{lemma}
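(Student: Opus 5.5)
The argument has two independent parts: the functional-analytic properties of $Q_K$, and the second-order expansion of the energy around the stationary wave.

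\textbf{Properties of $Q_K$.} Symmetry is immediate from the definition. For boundedness on $H^1$, note that $|\cos K|\le 1$. Using the one-dimensional embedding $|v(0)|\le \|v\|_{L^\infty}\le C\|v\|_{H^1}$ we get
\[
|Q_K(v,w)|\le C\,(1+|q|)\,\|v\|_{H^1}\|w\|_{H^1}.
\]
For the lower bound, apply inequality \eqref{Gagl} with $a$ large:
\[
|q|\,|v(0)|^2\le \tfrac{|q|}{2}\Big(a\|v\|_{L^2}^2+\tfrac1a\|v_x\|_{L^2}^2\Big).
\]
Choosing $a=|q|$ gives
\[
Q_K(v,v)\ge \tfrac12\|v_x\|_{L^2}^2-\big(1+\tfrac{q^2}{2}\big)\|v\|_{L^2}^2 .
\]
So $Q_K$ is bounded below, and $Q_K(v,v)+\beta\|v\|_{L^2}^2$ is equivalent to $\|v\|_{H^1}^2$ for $\beta$ large. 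This equivalence of norms on the form domain $H^1$, which is complete, is exactly closedness. This is the same $\beta$-argument used in Lemma~\ref{C0Lemma}.

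\textbf{Energy expansion.} Write $E(v_1+K,v_2)-E(K,0)$ term by term.
\begin{itemize}
\item The kinetic part gives exactly $\tfrac12\|v_2\|_{L^2}^2$.
\item The gradient part gives $\int K_x v_{1,x}+\tfrac12\int v_{1,x}^2$.
\item For the potential, use the exact identity $\cos(K+v)=\cos K\cos v-\sin K\sin v$. Then
\[
(1-\cos(K+v))-(1-\cos K)=\sin K\,v+\tfrac12\cos K\,v^2+R(x),
\]
with $R=\cos K(1-\cos v-\tfrac{v^2}{2})+\sin K(\sin v - v)$. Hence $|R|\le C|v|^3$.
\item The $\delta$ term produces the analogous expression evaluated at $x=0$, multiplied by $q$.
\end{itemize}

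The first-order terms are $\int K_x v_x+\sin K\, v\,dx+q\sin(K(0))v(0)$. This vanishes by the weak formulation \eqref{weak del} of \eqref{staticDelta}, with test function $\varphi=v_1\in H^1$. For the kink, $K\notin H^1$, but $K_x\in L^2$ and $\sin K\in L^2$, so each integral is finite and Definition~\ref{Defso} applies. The quadratic terms are exactly $\tfrac12 Q_K(v_1,v_1)$.

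\textbf{Remainder.} It remains to bound
\[
\int|R|\le C\|v_1\|_{L^\infty}\|v_1\|_{L^2}^2\le C\|v_1\|_{H^1}^3
\qquad\text{and}\qquad
|q|\,|R(0)|\le C\|v_1\|_{H^1}^3,
\]
both of which are $o(\|v_1\|_{H^1}^2)$.

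The only delicate point is justifying that the linear term vanishes when $K$ is the kink, i.e.\ that pairing with $v_1\in H^1$ is legitimate. Since Definition~\ref{Defso} explicitly allows test functions in $H^1$, this is covered by Proposition~\ref{CarSW}.
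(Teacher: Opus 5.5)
Your proposal is correct and follows the same route as the paper: a second-order Taylor expansion of $E$ around $K$, with the linear term cancelled by the weak formulation of Definition~\ref{Defso} tested against $v_1\in H^1$, and the quadratic part identified as $\tfrac12 Q_K(v_1,v_1)$. You add details the paper leaves as ``direct from the definition'': the lower bound and closedness of $Q_K$ via \eqref{Gagl}, and an explicit $O(\|v_1\|_{H^1}^3)$ bound on the remainder.
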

\begin{proof}
    The first statement is direct from the definition of $Q_K$. For the second consider the Taylor expansion 
    \begin{align}
        E(K+v_1,v_2)& = E(K, 0 ) + \int v_x K_x + \sin(K) v\,dx + q  v(0)\sin K(0) \nonumber\\ 
        &+\frac{Q_K(v_1,v_1)}{2}  + \frac{\left \| v_2\right \|_{L^2}^2}{2}  + o\left (\|v\|_{H^1}^2\right )  \nonumber
      \\& =  E(K,0)+ +\frac{1}2 Q_K(v_1,v_1) + \frac{1}{2} \left \| v_2\right \|_{L^2}^2+ o(\| v_1\|_{H^1}^2 ).
    \label{alli}
    \end{align}
In the last equality we have used \eqref{staticDelta} to cancel the first order term in $v$.
\end{proof}

\begin{lemma}
\label{Q=L}
Consider $(\mathcal{L_K}, D(\mathcal{L}_K))$ the self adjoint operator given in Lemma \ref{lemadj}. It coincides with $Q_K$, i.e. it holds
\begin{equation}
\label{calcu}
    (\mathcal{L}_K u, v)_{H^1}= Q(u,v) \ \ \ \forall \ u\in D(\mathcal{L}),  \ v \in H^1. 
\end{equation}
Moreover $\mathcal L_K$ has the same bound from below of $Q_K$: 
\begin{equation}
\inf_{0\neq u \in D(\cL)} \frac { \langle \cL u, u \rangle_{L^2} } {\|u\|_{L^2}^2} = \inf_{0\neq u \in H^1} \frac {Q_K(u,u)}{\|u\|_{L^2}^2}
\end{equation}
\end{lemma}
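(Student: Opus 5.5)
We prove \eqref{calcu} by splitting $\R$ at the origin and integrating by parts on each half-line; the domain condition in \eqref{domain} then produces exactly the boundary term of $Q_K$. (The pairing in \eqref{calcu} must be the $L^2$ product, consistent with \eqref{eqoper}.) The equality of the infima will follow from density of $D(\cL_K)$ in $H^1$ together with the continuity of $Q_K$ on $H^1$. Equivalently, $\cL_K$ is the self-adjoint operator associated with the closed form $Q_K$ by the representation theorem, and the two infima are both the bottom of the spectrum.

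\emph{Step 1: identity.} Take $u\in D(\cL_K)$ and $v\in H^1$. Since $u\in H^2(\R\setminus\{0\})$, we have $u_x\in H^1$ on each half-line, so $u_x v$ has limits at $0^\pm$ and vanishes at $\pm\infty$. On $(-\infty,0)$ and on $(0,\infty)$ integration by parts gives
\[
\int_\R (-u_{xx}) v\,dx = \int_\R u_x v_x\,dx + \big(u_x(0^+)-u_x(0^-)\big)v(0),
\]
where we used the continuity of $v$ at $0$. The jump condition in \eqref{domain} replaces the bracket by $q\cos(K(0))u(0)$. Adding $\int_\R \cos K\, uv\,dx$ yields $\langle \cL_K u,v\rangle_{L^2}=Q_K(u,v)$. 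This is the same computation as \eqref{eqoper}, and it could be quoted directly from Lemma \ref{lemadj}.

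\emph{Step 2: bound from below.} Since $D(\cL_K)\subset H^1$, Step 1 gives
\[
\inf_{D(\cL_K)}\frac{\langle\cL_K u,u\rangle}{\|u\|_{L^2}^2}\ \ge\ \inf_{H^1}\frac{Q_K(u,u)}{\|u\|_{L^2}^2}.
\]
For the reverse inequality we need $D(\cL_K)$ to be dense in $H^1$ for the $H^1$ norm. Given this, $Q_K$ is continuous on $H^1$: the point term is controlled by $|u(0)|\le C\|u\|_{H^1}$. Hence for any $u\in H^1$ we can approximate the Rayleigh quotient of $u$ by quotients of elements of $D(\cL_K)$, and the infimum over $H^1$ is attained in the limit from $D(\cL_K)$.

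\emph{Step 3: density, the only real point.} Given $u\in H^1$, first approximate it in $H^1$ by $w\in C_c^\infty(\R)$. Then correct the jump: set $w_\eta = w+ c\,\chi(x/\eta)\,|x|$, where $\chi$ is the cutoff used in Lemma \ref{Lemma1}. This corrector vanishes at $0$, so $w_\eta(0)=w(0)$. Its derivative jumps by $2c$ at $0$, so choosing $c=\tfrac12 q\cos(K(0))w(0)$ places $w_\eta$ in $D(\cL_K)$. The $H^1$ norm of the corrector is of order $|c|\,\eta^{1/2}$, which tends to $0$ as $\eta\to0$. This settles density.

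Alternatively, one can observe that $Q_K$ is closed and bounded below on $H^1$, with $\cL_K$ as its associated operator, and invoke Kato's representation theorem: $D(\cL_K)$ is then a core for the form, and the bottom of the spectrum equals the form infimum. The explicit corrector above avoids citing that machinery, and I expect it to be the only nonroutine step.
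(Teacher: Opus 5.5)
Your proof is correct. Step 1 is the same ``explicit calculation'' the paper alludes to, and you are right that the pairing in \eqref{calcu} must be read as the $L^2$ product.

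For the equality of the infima the routes differ. The paper simply cites Kato's first and second representation theorems (Ch.~VI, Thms.~2.1 and 2.6). You instead prove by hand the one fact those theorems supply here: $D(\cL_K)$ is a core for $Q_K$, i.e.\ dense in $H^1$. You do this with the corrector $c\,\chi(x/\eta)|x|$, where $c=\tfrac12 q\cos(K(0))w(0)$. Together with $D(\cL_K)\subset H^1$ and the $H^1$-continuity of $Q_K$, this gives both inequalities.

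Your argument is self-contained. It needs neither the self-adjointness of $\cL_K$ from Lemma \ref{lemadj} nor the closedness of $Q_K$.

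The Kato citation is shorter, but it tacitly requires identifying $\cL_K$ as the operator associated with the closed form $Q_K$. That identification uses the identity, the self-adjointness, and the uniqueness clause of the first representation theorem. In return, it says directly that the common infimum is $\inf\sigma(\cL_K)$, which is what Corollary \ref{Corq} and Lemma \ref{PositiveSpect} actually use. With your approach you recover that last step from the spectral theorem for the self-adjoint $\cL_K$, which equates $\inf\sigma(\cL_K)$ with the Rayleigh infimum over $D(\cL_K)$.
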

\begin{proof}
    The calculation for \eqref{calcu} is explicit; for the other statement see Kato \cite[Chapter VI, Theorems 2.1 and 2.6]{Kat80}.
\end{proof}

We prove existence (resp. nonexistence) of a negative eigenvalue for the operator $(\mathcal{L}_Z, D(\mathcal{L}_Z)$ for $q>0$ (resp. $q<0$).
\begin{lemma}
\label{Negative}
    Let $q>0$, $(K,0)$ a stationary wave and $Q_K$ the bilinear form associated to $K$. Then there exists $v\in H^1$ such that 
    \[Q_K(v,v) <0\]
\end{lemma}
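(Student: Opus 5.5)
The plan is to exhibit an explicit test function built from the derivative of the stationary wave. On each half-line, the function $Q_x$ (or $K_x$) solves the linearized equation $-\partial_{xx} w + \cos K\, w = 0$; this is the identity \eqref{tildeK}. So for a test function $v$ that solves this ODE on $\R\setminus\{0\}$ and decays at $\pm\infty$, integrating by parts separately on $(-\infty,0)$ and $(0,\infty)$ kills the bulk integral in $Q_K(v,v)$. Only boundary terms at the origin survive:
\[
Q_K(v,v) = v(0)\big(v_x(0^-) - v_x(0^+)\big) + q\cos(K(0))\, v(0)^2 .
\]
The task then reduces to checking the sign of this boundary quantity in the two cases of Proposition \ref{CarSW}.

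\emph{Kink case.} Here $K(x)=4\arctan e^x$, so $K(0)=\pi$ and $\cos K(0)=-1$. Take $v=K_x=2\,\mathrm{sech}\,x\in H^1$. It is smooth, so there is no jump in $v_x$, and it decays exponentially, so $\int v_x^2+\cos K\, v^2\,dx=0$. Hence
\[
Q_K(v,v) = -q\,K_x(0)^2 = -4q < 0 \qquad \text{for } q>0 .
\]

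\emph{Ground state case ($q>2$).} Take $v:=|Q_x|$. Since $Q$ is even and decreasing in $|x|$, this gives $v=Q_x$ on $x<0$ and $v=-Q_x$ on $x>0$. Then $v$ is continuous, lies in $H^1$, and solves the linearized ODE on each half-line. Using the static equation $Q_{xx}=\sin Q$ away from $0$, we get $v_x(0^-)=\sin Q(0)$ and $v_x(0^+)=-\sin Q(0)$. Therefore
\[
Q_K(v,v) = v(0)\big(2\sin Q(0) + q\, Q_x(0^-)\cos Q(0)\big).
\]
The bracket is exactly the quantity computed in \eqref{contoq} in the proof of Lemma \ref{lemspec}. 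It equals $-2(q-2)\sqrt{q^2-4}/q^2$, which is strictly negative for $q>2$. Since $v(0)=Q_x(0^-)>0$, we conclude $Q_K(v,v)<0$.

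The only step needing care is the boundary bookkeeping in the integration by parts. The sign of $v_x(0^\pm)$ under the choice $v=|Q_x|$ must be tracked correctly, and $Q_x(0^-)$ must be identified with the quantity written as $K_x(0)$ in \eqref{contoq}. Beyond this, the argument is a direct computation.
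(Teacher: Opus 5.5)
Your proposal is correct and follows the paper's proof essentially verbatim. Both use the test function $|K_x|$ and integrate by parts on each half-line, using \eqref{tildeK} so that only the boundary terms at $0$ survive. For the kink both use $\cos K(0)=-1$, and for the ground state both use the bracket from \eqref{contoq}.

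One small caveat: the closed form you quote from \eqref{contoq} is off by a factor $q+2$. Using $q\sin Q(0)=-2Q_x(0^-)$ and $\cos Q(0)=8/q^2-1$, the bracket actually equals $Q_x(0^-)(4-q^2)/q=-2(q^2-4)^{3/2}/q^2$; for example, at $q=4$ it is $-3\sqrt{3}$, not $-\sqrt{3}/2$. It is still strictly negative for $q>2$, so your conclusion $Q_K(v,v)<0$ stands.
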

\begin{proof}
Define $v:= |K_x|$, strictly positive and in $H^1$. We have
    \begin{align}
      Q_K(v,v) & =\int v_x^2 + \cos Kv^2\,dx + q v^2(0) \cos(K(0))\\  &= \int -v_{xx}v + \cos (K)v^2 \,dx - v(0^+) v_x(0^+)  + v(0^-)v_x(0^-) + q v^2(0) \cos(K(0))\notag
\\& = (v_x(0^-)-v_x(0^+))v(0) + qv^2(0) \cos(K(0))
    \end{align}
    The integral term in the last equality vanishes due to \eqref{tildeK}.
If $K$ is a kink, we are done as $\cos(K(0)) = -1<0$ and the other boundary term vanishes by smoothness.
    For $K$ a ground state, we have already evaluated the boundary term in \eqref{contoq}, which is negative for $q>2$.
\end{proof}

\begin{corollary}
\label{Corq}
Let $q>0$ and $(K,0)$ be a stationary wave. The Morse index of $(\cL_K, D(\cL_K))$ is $n(\cL_K) = 1$.
\end{corollary}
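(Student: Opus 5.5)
The plan is to combine the upper bound from Lemma \ref{lemma1n} with a lower bound obtained from Lemma \ref{Negative} through the variational characterization in Lemma \ref{Q=L}.

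\emph{Upper bound.} Lemma \ref{lemma1n} states that $\cL_K$ has at most one negative eigenvalue. So it is enough to show that $\cL_K$ has at least one negative eigenvalue. The approach is to show that the bottom of the spectrum is negative and that it lies below the essential spectrum.

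\emph{Negative bottom of the spectrum.} For $q>0$, Lemma \ref{Negative} gives $v\in H^1$ with $Q_K(v,v)<0$. Lemma \ref{Q=L} identifies the lower bound of $\cL_K$ with that of the form $Q_K$. Therefore
\[
\lambda_0:=\inf_{0\neq u\in D(\cL_K)}\frac{\langle \cL_K u,u\rangle_{L^2}}{\|u\|_{L^2}^2}\le \frac{Q_K(v,v)}{\|v\|_{L^2}^2}<0 .
\]
Since $\cL_K$ is self-adjoint (Lemma \ref{lemadj}), $\lambda_0=\inf\sigma(\cL_K)$.

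\emph{$\lambda_0$ is an eigenvalue.} Lemma \ref{lemspec} gives $\sigma_{\mathrm{ess}}(\cL_K)=[1,\infty)$. Since $\lambda_0<0<1$, the point $\lambda_0$ lies outside the essential spectrum. It therefore belongs to the discrete spectrum: it is an isolated eigenvalue of finite multiplicity.

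\emph{Exactly one negative eigenvalue.} Any other negative spectral point would also lie below $1$, hence would be a discrete eigenvalue. This would contradict Lemma \ref{lemma1n}, provided that lemma is read as counting multiplicity, i.e.\ the negative spectral subspace has dimension at most one. This is what the deficiency-index argument gives: $\cM\ge 0$ and $n_\pm(\cM)=1$ imply that any self-adjoint extension has at most one negative eigenvalue counting multiplicity. Hence $n(\cL_K)=1$. Combined with $\ker\cL_K=\{0\}$ from Lemma \ref{lemspec}, this also yields the spectral gap structure \eqref{SpecCond} with $r_0>0$ needed later.

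\emph{Main obstacle.} The delicate point is the precise passage from the form to the operator. One must check that $Q_K$ is exactly the closed form associated with $\cL_K$, with form domain $H^1$, so that Kato's representation theorem applies. In particular, the boundary term $q\cos K(0)\,v(0)w(0)$ must match the jump condition in $D(\cL_K)$. The argument also relies on the multiplicity interpretation of Lemma \ref{lemma1n}, which I would verify via the min-max principle. Concretely, if two orthogonal negative eigenfunctions existed, some nonzero linear combination would vanish at $0$. That combination would lie in the form domain of $\cM$ and make the form negative, contradicting $\cM\ge 0$.
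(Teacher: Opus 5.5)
Your proposal is correct and follows essentially the paper's route. The upper bound $n(\cL_K)\le 1$ comes from $\cM\ge 0$ with deficiency indices $(1,1)$ (Lemma \ref{lemma1n}, i.e.\ extension theory), and the lower bound comes from the negative test function of Lemma \ref{Negative}, transferred to $\cL_K$ through the form identification of Lemma \ref{Q=L}. Your extra steps make explicit what the paper leaves implicit: you place $\inf\sigma(\cL_K)<0$ below $\sigma_{\mathrm{ess}}(\cL_K)=[1,\infty)$ so that it is a discrete eigenvalue, and you use the linear-combination argument (a combination vanishing at $0$, tested against the closed form of $\cM$) to count multiplicity.
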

\begin{proof}
From definition it is clear that the closed symmetric operator $(\mathcal{M}, D(\mathcal{M}))$ from \eqref{defcalM} is non-negative with deficiency indices $n_\pm(\mathcal{M}) = 1$ by direct calculation. From extension theory it is known that $n(\mathcal{L}_Z) \leq 1$ for any self-adjoint extension $(\mathcal{L}_Z, D(\mathcal{L}_Z))$ (see Proposition A.3 in \cite{AnPl21a}). The result follows by Lemmas \ref{Q=L} and \ref{Negative}.
\end{proof}

\begin{lemma}
\label{PositiveSpect}
Let $q<0$ and $(K,0)$ be a stationary wave. Then there exists $\ep_q>0$ such that 
\[ \sigma (\cL_K) \subset [ \ep_q, \infty)\]
\end{lemma}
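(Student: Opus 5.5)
Since $\sigma_{\mathrm{ess}}(\cL_K)=[1,\infty)$ by Lemma \ref{lemspec}, any spectrum below $1$ consists of isolated eigenvalues of finite multiplicity. It therefore suffices to prove that $\cL_K$ has no nonpositive eigenvalue. Indeed, the part of the spectrum in $(-\infty,1/2]$ is then a finite set of positive eigenvalues, since eigenvalues can accumulate only at the essential spectrum. Taking $\ep_q$ to be the minimum of $1/2$ and the smallest of these eigenvalues gives the claim. By Lemma \ref{lemspec} we already know $\ker\cL_K=\{0\}$, so the task reduces to excluding a negative eigenvalue. By Lemma \ref{Q=L}, this amounts to showing $Q_K(u,u)\ge 0$ for all $u\in H^1$.

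To prove nonnegativity of $Q_K$ I would repeat the Hardy-type factorization from the proof of Lemma \ref{lemma1n}, now on the full domain rather than on $D(\cM)$. Take $v:=|K_x|>0$. On each half-line $v$ satisfies $-v_{xx}+\cos K\, v=0$ by \eqref{tildeK}. For $u\in H^1$ with compact support (dense in $H^1$, and $Q_K$ is continuous on $H^1$) write $u=gv$. Integrating by parts separately on $(-\infty,0)$ and $(0,\infty)$, as in \eqref{contoHardy}, gives
\[
Q_K(u,u)=\int_\R g_x^2v^2\,dx + g(0)^2 v(0)\bigl(v_x(0^-)-v_x(0^+)\bigr) + q\cos(K(0))\,g(0)^2v(0)^2 .
\]
This is exactly the computation of Lemma \ref{Negative} with a weight $g$. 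The boundary coefficient is $v(0)\bigl[v_x(0^-)-v_x(0^+)+q\cos K(0)\,v(0)\bigr]$, so everything hinges on its sign.

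Two cases then remain. For the kink, $v=K_x$ is smooth and $\cos K(0)=-1$, so the boundary coefficient equals $-q\,v(0)^2>0$ because $q<0$. For the ground state with $q<-2$, $v=|Q_x|$, and the same bracket was evaluated in \eqref{contoq}, where it equals a positive multiple of $-(q-2)\sqrt{q^2-4}/q^2$. This is positive for $q<-2$, the opposite sign to the case $q>2$ used in Lemma \ref{Negative}. I expect this sign check to be the main point of care. One must make sure the quantity in \eqref{contoq} really is, up to a positive factor $v(0)$, the boundary coefficient above. The relation between $2\sin K(0)$ and $v_x(0^-)-v_x(0^+)=2|K_{xx}(0^-)|$ must also be tracked with correct signs, because $Q$ is increasing on $x<0$ with $K_{xx}(0^-)=\sin Q(0)$. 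In both cases we get $Q_K(u,u)\ge\int g_x^2v^2\ge0$, and hence $\cL_K\ge0$.

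Finally, combining $\cL_K\ge 0$, $\ker\cL_K=\{0\}$ and $\sigma_{\mathrm{ess}}=[1,\infty)$, the bottom of the spectrum is either $1$ or an isolated positive eigenvalue. Either way it is some $\ep_q>0$, and this concludes the argument. As an alternative route, one could note that the boundary term is strictly positive, so $Q_K(u,u)=0$ forces $g$ constant and $g(0)=0$, hence $u=0$. This gives injectivity directly, but the appeal to Lemma \ref{lemspec} already suffices.
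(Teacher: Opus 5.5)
Your proof is correct and follows the paper's route. The Hardy factorization $u=g\,|K_x|$ reduces everything to the sign of the boundary coefficient $v(0)\bigl[v_x(0^-)-v_x(0^+)+q\cos K(0)\,v(0)\bigr]$. This equals $-q\,v(0)^2$ for the kink, and $v(0)$ times the bracket of \eqref{contoq} for the ground state. A direct computation gives that bracket as $2(q^2-4)^{3/2}/q^2>0$ for $q<-2$, so the sign you borrow is right, even though the displayed value in \eqref{contoq} is off by the positive factor $|q+2|$.

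Your organization is slightly cleaner than the paper's contradiction argument on $D(\cL_K)$. You prove nonnegativity of $Q_K$ on compactly supported $H^1$ functions, then use $\ker\cL_K=\{0\}$ and $\sigma_{\mathrm{ess}}(\cL_K)=[1,\infty)$ to get a positive spectral gap. Your boundary bookkeeping is also the correct one, whereas the paper's displayed boundary identities contain sign slips.
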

\begin{proof}
$(\cL_K, D(\cL_K))$ is a self adjoint operator, bounded below and $\sigma_{\mathrm{ess}} (\cL_K) = [1,\infty)$. If the thesis is false, there exists $u\in D(\cL)$ such that
\begin{equation}
\label{TBDP} \langle \cL_K u,u \rangle_{L^2} \le 0  
\end{equation}
Let $v:= |K_x|$, and $g:= \frac uv$. Repeating the calculation as in \eqref{contoHardy} we get
\[ \langle \cL_K u,u \rangle_{L^2} = \int_\R v^2 g_x^2 +  g(0)v^2(0) \left (g_x(0^-) - g_x(0^+)\right)\]
Adapting the boundary evaluation \eqref{BoundHardy} to the conditions satisfied by $u\in D(\cL)$ we get 
\[ g(0)v^2(0) \left( g_x(0^-) - g_x(0^+) \right ) = g(0)^2 v(0) \left (q\cos(K(0))v(0) +  K_{xx}(0^+) -K_{xx} (0^-)\right )  
\] 
The right hand side has already been evaluetad in lemma \ref{lemma1n}, and whether $K$ is a kink or a ground state it is positive for $q<0$.
\end{proof}

\begin{proposition}
\label{PropSt}
Let $q>0$ (resp $q>2$). Then the the stationary waves $(K,0)$ (resp. $(K,0)$ and $(Q,0)$) are spectrally unstable. 
Conversely, let $q<0$ (resp $q<-2$). Then the the stationary waves $(K,0)$ (resp. $(K,0)$ and $(Q,0)$) are spectrally stable. 
\end{proposition}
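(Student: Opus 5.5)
The plan is to treat the two signs of $q$ separately, combining the spectral lemmas already established with the relation $\lambda \in \sigma(JE_K)$ if and only if $-\lambda^2 \in \sigma(\cL_K)$, which was used in the proof of Lemma \ref{C0Lemma}.

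\emph{Unstable case} ($q>0$, and $q>2$ for the ground state). We verify the three hypotheses of Theorem \ref{InstabilityCrit}.
\begin{enumerate}
\item Self-adjointness and dense domain of $(\cL_K,D(\cL_K))$ are given by Lemma \ref{lemadj}.
\item Invertibility comes from Lemma \ref{lemspec}: $\ker\cL_K=\{0\}$ and $\sigma_{\mathrm{ess}}(\cL_K)=[1,\infty)$, so $0$ is not in the spectrum. The Morse index equals one by Corollary \ref{Corq}, which gives a single negative eigenvalue $\lambda_0$.
\item It remains to produce the gap $r_0>0$ in \eqref{SpecCond}. Below $1$ the spectrum of $\cL_K$ consists only of isolated eigenvalues of finite multiplicity. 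Lemma \ref{lemma1n} and Corollary \ref{Corq} show that exactly one of them is negative, and $0$ is excluded. Hence the rest of the spectrum lies in $[r_0,\infty)$, where $r_0$ is the smallest nonnegative eigenvalue if one exists and $r_0=1$ otherwise.
\item Generation of a $C_0$ semigroup by $JE_K$ is Lemma \ref{C0Lemma}.
\end{enumerate}
Theorem \ref{InstabilityCrit} then yields a positive real eigenvalue of $JE_K$, so $\sigma(JE_K)\not\subset i\R$ and $(K,0)$ is spectrally unstable.

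\emph{Stable case} ($q<0$, and $q<-2$ for the ground state). Lemma \ref{PositiveSpect} gives $\sigma(\cL_K)\subset[\ep_q,\infty)$. Take $\lambda\in\sigma(JE_K)$; then $\mu:=-\lambda^2\in\sigma(\cL_K)$, so $\mu$ is real and positive. Writing $\lambda=a+ib$, we have $-\lambda^2=b^2-a^2-2iab$. Reality forces $ab=0$, and positivity then excludes $b=0$, $a\neq0$. Therefore $a=0$ and $\lambda\in i\R$, which is spectral stability.

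The main obstacle is making rigorous the correspondence $\lambda\in\sigma(JE_K)\iff-\lambda^2\in\sigma(\cL_K)$ for the operator with domain $D(\cL_K)\times H^1$. I would argue it directly by solving $(JE_K-\lambda)(v_1,v_2)=(f,g)$. Substituting $v_2=\lambda v_1+f$ reduces the system to $(\cL_K+\lambda^2)v_1=-g-\lambda f$. Resolvent bounds for $\cL_K+\lambda^2$ on $L^2$, together with the form bound from Lemma \ref{Q=L}, then control $v_1$ in $H^1$. A secondary point needing care is showing that the spectrum of $\cL_K$ below $1$ is discrete, which is needed for the gap $r_0$; this follows from Weyl's theorem applied through Lemma \ref{lemspec}.
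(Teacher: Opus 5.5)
Your proposal is correct and follows essentially the same route as the paper. For $q>0$ you verify the hypotheses of Theorem~\ref{InstabilityCrit} via Lemmas~\ref{lemadj}, \ref{lemspec}, \ref{C0Lemma} and Corollary~\ref{Corq}, and for $q<0$ you combine Lemma~\ref{PositiveSpect} with the correspondence $\lambda\in\sigma(JE_K)\Leftrightarrow -\lambda^2\in\sigma(\cL_K)$; your extra details (discreteness of the spectrum below $1$ giving the gap $r_0>0$, and the resolvent reduction $(\cL_K+\lambda^2)v_1=-g-\lambda f$) only fill in steps the paper leaves implicit.
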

\begin{proof}
For $q>0$, by Corollary \ref{Corq} $(\cL_K, D(\cL_K))$ has exactly one negative eigenvalue; togheter with Lemma \ref{lemspec} this implies 
the condition \eqref{SpecCond} is verified. Moreover the operator is self-adjoint (Lemma \ref{lemadj}) and 
$JE_K$ generates a $C_0$ semigroup (Lemma \ref{C0Lemma}); the result follows from Theorem \ref{InstabilityCrit}.
For $q<0$, the result is direct from definition of $JE_K$ and spectra of $\cL_K$ given in Lemma \ref{PositiveSpect}.
\end{proof}

\subsection{Nonlinear stability analysis}

In this Subsection we directly apply Proposition \ref{PropSt} to prove Theorem \ref{TeoSt} pertaining to nonlinear stability. For the instability argument, on the other hand, it is customary in the literature to invoke a very general (abstract) result by Henry \emph{et al.} \cite{HPW82}, which essentially links nonlinear (orbital) instability from spectral instability. 

\begin{theorem}[Henry \emph{et al.} \cite{HPW82}]
\label{HPW2}
Let $Y$ be a Banach space and $U \subset Y$ an open subset such that $0 \in U$.
Assume there exists a map $M : U \to Y$ with $M(0)=0$ and a continuous linear operator
$L : Y \to Y$ with spectral radius $r(L) > 1$ such that for some $p>1$,
\begin{equation}
\|M(y) - Ly\|_Y = O\!\left(\|y\|_Y^p\right)
\quad \text{as } y \to 0 . \label{estimateHPW}
\end{equation}
Then $y=0$ is an unstable fixed point of $M$. More precisely, there exists
$\varepsilon_0 > 0$ such that for every ball $B_\eta(0)\subset Y$ and every
$N_0 \in \mathbb{N}$, there exist $n \ge N_0$ and $y \in B_\eta(0)$ such that
\[
\|M^n(y)\|_Y \ge \varepsilon_0 .
\]
\end{theorem}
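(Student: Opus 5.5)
The plan is the classical ``linear growth dominates the nonlinear error'' argument. We measure growth along an approximate eigenvector of $L$ for a spectral value of maximal modulus. Set $r:=r(L)>1$ and fix $\delta>0$ so small that $r+\delta<r^p$; this is possible because $p>1$ and $r>1$. By Gelfand's formula there is $C_\delta\ge 1$ with $\|L^n\|\le C_\delta (r+\delta)^n$ for all $n\ge0$. Pick $\lambda\in\sigma(L)$ with $|\lambda|=r$. Such a $\lambda$ lies on the boundary of the spectrum, hence in the approximate point spectrum. So for every $\tau>0$ there is a unit vector $v_\tau$ with $\|(L-\lambda)v_\tau\|\le\tau$. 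Telescoping $L^k-\lambda^k=\sum_{j<k}L^{j}(L-\lambda)\lambda^{k-1-j}$ gives
\[
\|L^k v_\tau-\lambda^k v_\tau\|\le \tau\, k\, C_\delta (r+\delta)^{k}.
\]
Given a horizon $N$, we choose $\tau$ small enough that this error is at most $\tfrac14 r^k$ for all $k\le N$. Then $\tfrac34 r^k\le\|L^kv_\tau\|\le \tfrac54 r^k$ for $k\le N$.

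Next we set up the Duhamel-type expansion of the iterates. Write $R(y):=M(y)-Ly$, so that $\|R(y)\|\le A\|y\|^p$ for $\|y\|\le\rho_0$ by \eqref{estimateHPW}, where we take $\rho_0$ so small that the ball of radius $\rho_0$ lies in $U$. For $y_0=\eta v_\tau$ the iterates satisfy
\[
M^n(y_0)=L^n y_0+\sum_{k=0}^{n-1}L^{n-1-k}R\big(M^k(y_0)\big),
\]
as long as they stay in the ball of radius $\rho_0$. We prove by induction on $n\le N$ the bound $\|M^n(y_0)\|\le 2\eta r^n$. Assuming it for all $k<n$, the error term is bounded by
\[
\sum_{k<n} C_\delta (r+\delta)^{n-1-k} A (2\eta r^k)^p \;\le\; C_1\,\eta^p r^{pn}\sum_{k<n}\Big(\tfrac{r+\delta}{r^p}\Big)^{n-1-k}\;\le\; C_2\,(\eta r^n)^p ,
\]
where the geometric series converges precisely because $r+\delta<r^p$. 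If $\eta r^n\le\varepsilon_1$ with $C_2\varepsilon_1^{p-1}\le\tfrac14$, this error is at most $\tfrac14\eta r^n$. Combined with $\|L^ny_0\|\le\tfrac54\eta r^n$, this closes the induction. It also gives the lower bound $\|M^n(y_0)\|\ge \tfrac34\eta r^n-\tfrac14\eta r^n=\tfrac12\eta r^n$.

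The constants are chosen in the following order. First fix $\varepsilon_1\le\rho_0/2$ as above, and set $\varepsilon_0:=\varepsilon_1/(2r)$. Given $\eta$ and $N_0$, shrink $\eta$ if necessary so that the first index $N$ with $\eta r^N\ge\varepsilon_1/r$ satisfies $N\ge N_0$; this $N$ also satisfies $\eta r^N<\varepsilon_1$. Only after $N$ is fixed do we choose $\tau=\tau(N)$, and with it $v_\tau$. Then $y_0=\eta v_\tau\in B_\eta(0)$, and $\|M^N(y_0)\|\ge\tfrac12\eta r^N\ge\varepsilon_0$, which is the claim.

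The main obstacle is the step that replaces a genuine eigenvector by an approximate one. We need $\lambda$ in the approximate point spectrum, which holds because it lies on the boundary of $\sigma(L)$. We also need the dependence $\tau=\tau(N)$ to be compatible with the quantifiers: $\tau$ must be chosen after $N$, so the loss $k(r+\delta)^k\tau$ is uniform on $k\le N$ and never interacts with the nonlinear estimate. A further point is that the upper bound on $L^{n-1-k}$ inside the sum uses only Gelfand's estimate with rate $r+\delta$. This is why $\delta$ must be fixed first, with $r+\delta<r^p$.
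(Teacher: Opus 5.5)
The paper does not prove this statement. It is imported from Henry--Perez--Wreszinski \cite{HPW82} and used as a black box in the proof of Theorem \ref{TeoSt}, so there is no in-paper argument to compare against.

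Your proposal is a correct, self-contained proof along the standard lines for this result:
\begin{itemize}
\item a spectral value of maximal modulus lies in $\partial\sigma(L)$, hence in the approximate point spectrum;
\item Gelfand's bound with $r+\delta<r^p$ makes the accumulated error in the discrete Duhamel formula $O((\eta r^n)^p)$;
\item the bootstrap up to the first index $N$ with $\eta r^N\ge\varepsilon_1/r$ gives $\|M^N(y_0)\|\ge\varepsilon_0$.
\end{itemize}
Fixing $\delta$, then $\varepsilon_1$ and $\varepsilon_0$, then $N$, and only then $\tau$, is exactly the right order, and the estimates check out.

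\textbf{The real case is missing.} Your argument presupposes that $Y$ is a complex Banach space. If $Y$ is real, as in the paper's application where $H^1\times L^2$ consists of real-valued functions, then $\sigma(L)$ refers to the complexification. A maximal-modulus $\lambda$ may then be non-real, so $v_\tau$, and hence $y_0=\eta v_\tau$, lies in $Y+iY$, where $M$ is not defined.

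The standard remedy is to complexify $M$ as well:
\begin{itemize}
\item Put $M_{\C}(x+iy):=M(x)+iM(y)$ on $U+iU$.
\item Use a complex norm on $Y+iY$ with $\max(\|x\|,\|y\|)\le\|x+iy\|\le\|x\|+\|y\|$, for example $\sup_\theta\|\cos\theta\,x+\sin\theta\,y\|$.
\item Then $M_{\C}(0)=0$, $\|M_{\C}(z)-L_{\C}z\|\le 2A\|z\|^p$ for $\|z\|\le\rho_0$, $r(L_{\C})=r(L)$, and $M_{\C}^n(x+iy)=M^n(x)+iM^n(y)$.
\end{itemize}
Your conclusion for $M_{\C}$ at some $z=x+iy\in B_\eta(0)$ then yields $\|M^N(x)\|\ge\varepsilon_0/2$ or $\|M^N(y)\|\ge\varepsilon_0/2$, with $x,y\in B_\eta(0)$. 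Alternatively, you can work with real parts of $e^{i\psi}v_\tau$, choosing the phase $\psi$ after $N$.

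A cosmetic point: $\|\eta v_\tau\|=\eta$, so $y_0$ is not in the open ball $B_\eta(0)$. Use a strictly smaller radius, which your shrinking step already permits.
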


\begin{remark}
In applications to the instability of nonlinear waves, usually we require the data-solution map to be at least of class $C^2$ (see, e.g., Corollary 3 in \cite{AAP24} or Corollary 3.1 in \cite{AngNat16}), in order to conclude that we smoothly move away from the orbit generated by the wave (via translations). In the present case, however, since there is no translation invariance due to the delta potential, we are able to apply the nonlinear abstract result directly, as we shall see.
\end{remark}

%

To prove our instability result, we need the following Lemma.

\begin{lemma}
\label{LemmaPerC}
For any $\left (v_1^0, v_2^0\right) \in H^1 \times L^2$ there exists a unique global solution to \eqref{CauchyPert}, $(v_1,v_2)\in C^0\left (\R; H^1\times L^2\right )$. Moreover for any $T>0$, there exists $\ep_0$ small such that 
for any initial datum satisfying
\[
\left \|\left (v^1_0,v_2^0 \right ) \right \|_{H^1\times L^2} \le \ep_0.
\]
and for any $0<t<T$ there holds
\begin{equation}
\left \|(v_1(t), v_2(t))\right\|_{H^1\times L^2} \le C(T, \ep_0) \left \|\left (v_1^0, v_2^0\right)\right\|_{H^1\times L^2} 
\label{bound}
\end{equation}
\end{lemma}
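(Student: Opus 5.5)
Global existence and uniqueness will come from Theorem \ref{TeoExi}, by translating through the stationary wave. For $(v_1^0,v_2^0)\in H^1\times L^2$, the datum $(K+v_1^0,v_2^0)$ lies in $H^1_{\sin}\times L^2$: if $K$ is the kink this holds because $K\in H^1_{\sin}$ and $\sin(\tfrac12(K+v))-\sin(\tfrac12 K)$ is bounded by $\tfrac12|v|\in L^2$; if $K=Q\in H^1$ it is immediate. Theorem \ref{TeoExi} then gives a unique global solution $(u_1,u_2)\in C(\R;H^1_{\sin}\times L^2)$, using time reversibility for negative times. The difference $(v_1,v_2):=(u_1-K,u_2)$ solves \eqref{CauchyPert}, since $K$ solves \eqref{staticDelta} in the weak sense of Definition \ref{Defso}. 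Uniqueness for \eqref{CauchyPert} is inherited from the uniqueness in Theorem \ref{TeoExi}. To get $v_1\in C(\R;H^1)$ rather than only $H^1_{\sin}$, I will use the $L^2$ part of the Duhamel estimates below, or equivalently $\partial_t v_1=v_2$ together with $v_1(t)-v_1^0=\int_0^t v_2\,ds\in L^2$.

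For the bound \eqref{bound}, I plan a Duhamel/Gronwall argument around the linearized flow of Lemma \ref{C0Lemma}. Write \eqref{CauchyPert} as
\[
\partial_t \mathbf v = \cA \mathbf v + (0,\mathcal N(v_1)),
\]
where the remainder is
\[
\mathcal N(v_1) = -\bigl(\sin(K+v_1)-\sin K-\cos K\, v_1\bigr)(1+q\delta_0).
\]
The regular part of $\mathcal N$ is pointwise bounded by $\tfrac12 v_1^2$, so its $L^2$ norm is at most $C\|v_1\|_{L^\infty}\|v_1\|_{L^2}\le C\|v_1\|_{H^1}^2$. The singular part is a multiple of $\delta_0$ with coefficient bounded by $\tfrac{|q|}2 v_1(0)^2\le C\|v_1\|_{H^1}^2$.

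The main obstacle is that $\delta_0\notin L^2$, so the semigroup bound \eqref{bounC0} cannot be applied to $(0,c\,\delta_0)$ directly. I will try two routes.

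First, I will absorb the $\delta$-term into a time-dependent jump condition. The nonlinear problem is then a linear problem with Robin parameter $Z(t)=q\,\tfrac{\sin(K+v_1)(0)-\sin K(0)}{v_1(0)}$, and I will use an energy estimate with the form $X_\beta$ of Lemma \ref{C0Lemma}. Differentiating
\[
\mathcal E(t)=\tfrac12\|v_2\|^2+\tfrac12\|\partial_x v_1\|^2+\tfrac\beta2\|v_1\|^2+q\bigl(\cos K(0)-\cos(K+v_1)(0)-\sin K(0)\,v_1(0)\bigr)
\]
along smooth solutions, the $\delta$ contributions cancel exactly, as in the energy conservation of Theorem \ref{TeoExi}. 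This leaves
\[
\tfrac{d}{dt}\mathcal E=\langle \beta v_1-(\sin(K+v_1)-\sin K),v_2\rangle\le C\|\mathbf v\|^2_{H^1\times L^2}.
\]
The boundary term in $\mathcal E$ is bounded by $\tfrac{|q|}2 v_1(0)^2\le \tfrac{|q|}{2}\bigl(a\|v_1\|^2+a^{-1}\|\partial_xv_1\|^2\bigr)/2$ by \eqref{Gagl}. Choosing $a$ and $\beta$ large makes $\mathcal E$ equivalent to $\|\mathbf v\|^2_{H^1\times L^2}$ uniformly, with no smallness needed. Gronwall then gives $\|\mathbf v(t)\|\le Ce^{Ct}\|\mathbf v^0\|$, which is \eqref{bound} with $C(T)=Ce^{CT}$. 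The computation is justified on the regularized problem \eqref{CauchyAppV} and passed to the limit as in the proof of Theorem \ref{TeoExi}.

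If the energy route has gaps in the justification, I will fall back on a Duhamel estimate with the explicit kernel. The term $\int_0^t W(t-s)[0,c(s)\delta_0]\,ds=\int_0^t c(s)G(x,t-s)\,ds$ is bounded in $H^1\times L^2$ by $C(T)\|c\|_{L^2(0,T)}$, using the kernel bounds from the proof of Proposition \ref{PropExi}. A bootstrap on $[0,T]$ with $\ep_0$ small then gives \eqref{bound}.
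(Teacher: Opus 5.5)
Your proposal is correct. For existence and uniqueness it is the same as the paper: translate by $K$, apply Theorem \ref{TeoExi}, and subtract. You are in fact more careful than the paper, since you check $K+v_1^0\in H^1_{\sin}$ and $v_1\in C(\R;H^1)$.

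For \eqref{bound} your route differs. The paper writes Duhamel's formula around the semigroup $e^{\cA t}$ of Lemma \ref{C0Lemma}, with the quadratic remainder $\mathbf f$, and closes with \eqref{bounC0} and a nonlinear Gronwall inequality; this is where the smallness $\ep_0$ enters. As you point out, that argument tacitly applies $e^{\cA(t-s)}$ to $(0,c(s)\delta_0)$, which \eqref{bounC0} does not cover. It therefore really needs a trace (admissibility) estimate of the kind in your fallback route, $\|\int_0^t c(s)G(\cdot,t-s)\,ds\|_{H^1\times L^2}\le C(T)\|c\|_{L^2(0,T)}$.

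Your primary route never applies a propagator to $\delta_0$. In your modified energy $\mathcal E$ the $\delta$-contributions cancel exactly, and by \eqref{Gagl} it is equivalent to $\|\mathbf v\|^2_{H^1\times L^2}$ with no smallness assumption. Since $|\sin(K+v_1)-\sin K|\le |v_1|$, Gronwall then gives the stronger bound $\|\mathbf v(t)\|\le Ce^{Ct}\|\mathbf v^0\|$ for all data, not only small ones.

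One suggestion on the justification. Going through \eqref{CauchyAppV} is awkward, because $K$ is not stationary for the mollified equation. The difference $u^\ep-K$ then carries the forcing $q\sin K(0)\delta_0-q\rho_\ep\sin(K\cdot\rho_\ep)$, which you would have to pair with $v_2^\ep$, known only to lie in $L^2$. It is simpler to test \eqref{weak del} with $\varphi=v_1(t)\in H^1$. This gives
\[
\mathcal E(t)=E(K+v_1,v_2)-E(K,0)+\tfrac\beta2\|v_1\|_{L^2}^2-\int_\R\bigl(\cos K-\cos(K+v_1)-\sin K\,v_1\bigr)\,dx .
\]
The first difference is constant in $t$ by \eqref{Ener}. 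The remaining terms are $C^1$ in $t$, because their integrand is controlled pointwise by $v_1^2$ and $|v_1||v_2|$, and $\partial_t v_1=v_2\in C(L^2)$. Differentiating them yields exactly your formula $\frac{d}{dt}\mathcal E=\langle \beta v_1-(\sin(K+v_1)-\sin K),v_2\rangle_{L^2}$, rigorously and without any regularization.
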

\begin{proof}
$(K,0)$ is a static solution of \eqref{CauchyDeltaV}, and by Theorem \ref{TeoExi} there exists a unique solution $(w_1,w_2)$ of \eqref{CauchyDeltaV} with initial datum $(K+v_1^0, v_2^0)$. 
Hence $(v_1, v_2):= (w_1-K, w_2)$ is the unique global solution of \eqref{CauchyPert}.
Inequality \eqref{bound} comes from the bound over the linear propagator \eqref{bounC0} and from Gronwall's nonlinear inequality for the integral representation. 
\end{proof}

\begin{proof}[Proof of Theorem \ref{TeoSt}]
For $q>0$, let $\left (v_1^0, v_2^0 \right ) \in H^1\times L^2$ be the eigenfunction of $JE_K$ with eigenvalue $\lambda $ of positive real part. For $T>0$, define  the linear operator $L_T: H^1\times L^2\to H^1\times L^2 $ which 
associates to $\left (u_1^0, u_2^0\right)$ the solution at time $T$ of the Cauchy Linear problem \eqref{LinPert} with initial datum $\left (u_1^0, u_2^0 \right )$. 
Then it holds $L_T \left(v_1^0, v_2^0\right) =e^{\lambda T} \left (v_1^0, v_2^0 \right )$. For $T$ finite large enough, $L_T$ has an eigenvalue of modulus larger than $1$.
Consider now the nonlinear operator $N_T: H^1\times L^2\to H^1\times L^2$ which associates to $\left ( u_1^0, u_2^0\right )$ the unique solution of \eqref{CauchyPert} at time $T$. 
Since $(K,0)$ is a stationary wave, $(0,0)$ is a fixed point of $N_T$. By Duhamel expression
    \begin{align}
     \notag   \left \| N_T \left ( u_1^0, u_2^0\right) - L_T \left (u_1^0, u_2^0\right )\right\|_{H^1\times L^2 }& \le \left \| \int_0^T e^{\cA(t-s)} \mathbf{f}(u_1)(s)\,ds  \right \| _{H^1 \times L^2} 
\\ &\label{estfin} \le C(T) \|\left(  u_1(t), u_2(t)\right )\|_{L^\infty_T (H^1\times L^2 )}^2
    \end{align} 
    for $(u_1(t),u_2(t))$ solution of \eqref{CauchyPert} with initial datum $(u_1^0, u_2^0)$ and
    \begin{equation*}
        \mathbf{f}(u_1) = \begin{pmatrix}
            0\\ (\sin (u_1+ K)-\sin(K)- \cos K u_1)(1+q\delta_0)
        \end{pmatrix}
    \end{equation*}
With $T$ fixed, let $\ep_0$ be given in Lemma \ref{LemmaPerC}. For $\left \|\left (u^1_0,u_2^0 \right ) \right \|_{H^1\times L^2} \le \ep_0 $ we have the desired estimate \eqref{estimateHPW} from \eqref{bound}, \eqref{estfin}, 
and hence $(K,0)$ is unstable by Theorem \ref{HPW2}.
%
%

For the stability result, consider now $q<0$. By energy conservation of \ref{TeoExi} and coercivity of $Q_K$ for $q>0$ we have, for constants $C$ independent on the initial datum
\begin{align*}
C\left \|v_1^0,v_2^0 \right \|_{H^1\times L^2}^2 &\ge \left |  E\left (v_1^0 + K, v_2^0 \right ) - E(K,0) \right |
\\&=\left | E \left( v_1(t) + K , v_2(t)\right ) - E(K,0) \right |
\\&=\left | \frac 12 \|v_2(t)\|_{L^2}^2 + Q_K(v_1(t)) + o \left  ( \|v_1(t)\|_{H^1}^2\right ) \right |
\\& \ge \frac12 \|v_2(t)\|_{L^2}^2 + \ep_q \|v_1(t)\|_{H^1}^2 + o \left  ( \|v_1(t)\|_{H^1}^2\right ). 
\end{align*}
\end{proof}

\section*{Acknowledgements}
S. Moroni thanks the Department of Mathematics and Mechanics at IIMAS, UNAM for their hospitality and support during an academic visit in the Fall of 2024 when this work was initiated.

\section*{Summary Statement}

\subsection*{Funding declarations}
S. Moroni has been partially supported by the Basque Government through the BERC 2022-2025 program and IKUR program, by the project PID2023-146764NB-I00 funded by MICIU/AEI/10.13039/501100011033 and FEDER/EU, and by the Spanish State Research Agency through BCAM Severo Ochoa CEX2021-001142. The work of R. G. Plaza was partially supported by Secretar\'{\i}a de Ciencia, Humanidades, Tecnolog\'{\i}a e Innovaci\'{o}n (SECIHTI -- Ministry of Science, Humanities, Technology and Innovation), Mexico, grant CF-2023-G-122. 

\subsection*{Conflict of interest} The authors declare no conflict of interest.

\subsection*{Ethics declaration} Not applicable.

\subsection*{Data Availability} No datasets were generated or analyzed during the current study.

\def\cprime{$'\!\!$} \def\cprimel{$'\!$}




%

\end{document}